\renewcommand\Re{{\operatorname{Re}}}
\newcommand\R{{\mathbb{R}}}
\renewcommand\P{{\mathbf{P}}}
\newcommand\E{{\mathbf{E}}}
\newcommand\T{{\mathbf{T}}}
\newcommand\Var{{\operatorname{Var}}}
\newcommand\Z{{\mathbb{Z}}}
\newcommand\al{\alpha}
\newcommand\la{\lambda}
\newcommand\Be{{\mathbf e}}
\newcommand\Bg{{\mathbf g}}
\newcommand\Bv{{\mathbf v}}
\newcommand\Bx{{\mathbf x}}
\newcommand\By{{\mathbf y}}
\newcommand\BD{{\mathbf D}}
\newcommand\BN{{\mathbf N}}
\newcommand\BT{{\mathbf T}}
\newcommand\CA{{\mathcal A}}
\newcommand\CE{{\mathcal E}}
\newcommand\CS{{\mathcal S}}
\newcommand\CU{{\mathcal U}}
\newcommand\eps{\varepsilon}
\newcommand\lang{\langle}
\newcommand\rang{\rangle}
\newcommand{\Ent}{\operatorname{Ent}}
\theoremstyle{plain}
  \newtheorem{theorem}[subsection]{Theorem}
  \newtheorem{lemma}[subsection]{Lemma}
  \newtheorem{corollary}[subsection]{Corollary}
  \newtheorem{cor}[subsection]{Corollary}
  \newtheorem{condition}{Condition}
  \newtheorem{claim}[subsection]{Claim}
\theoremstyle{definition}
\begin{document}

 \title{Exponential concentration for the number of roots of random trigonometric polynomials}
 

\author{Hoi H. Nguyen}
 \address{Department of Mathematics\\ The Ohio State University \\ 231 W 18th Ave \\ Columbus, OH 43210, USA}
 \email{nguyen.1261@osu.edu}
\thanks{The first author is supported by National Science Foundation grant DMS-1752345. The second author is partially supported by a US-Israel BSF grant.  This work was initiated when both authors visited the American Institute of Mathematics in 
August 2019. We thank AIM for its hospitality.}

\author{Ofer Zeitouni}
\address{Faculty of Mathematics \\ Weizmann Institute and Courant Institute, NYU \\ Rehovot 76100, Israel and NY 10012, USA}  
\email{ofer.zeitouni@weizmann.ac.il}


\begin{abstract} 
  We show that the number of real roots of random trigonometric polynomials
  with i.i.d. coefficients, which are either bounded or 
  satisfy the logarithmic 
  Sobolev inequality, satisfies an exponential concentration of measure.
\end{abstract} 



\maketitle

\section{Introduction}

Consider a random trigonometric polynomial of degree $n$
\begin{equation}\label{eqn:P_n}
P_n(x) =\frac{1}{\sqrt{n}} \sum_{k=1}^n a_k \cos(kx) + b_k \sin(kx),
\end{equation}
where $a_k, b_k$ are i.i.d. copies of a random variable $\xi$ of mean zero and variance one. Let $N_n$ denote the number of roots of $P_n(x)$ for $x\in [-\pi,\pi]$. It is known from a work of Qualls \cite{Q} that when $\xi$ is standard gaussian then
$$\E N_n = 2 \sqrt{(2n+1)(n+1)/6}.$$
By a delicate method based on the Kac-Rice formula, about ten years ago Granville and Wigman \cite{GW} showed 
 \begin{theorem}\label{thm:GW} When $\xi$ is standard gaussian, there exists an explicit constant $c_{\Bg}$ such that
$$\Var(N_n)=(c_{\Bg}+o(2)) n.$$
Furthermore,
$$\frac{N_n -\E N_n}{\sqrt{c_{\Bg}n}} \to \BN(0,1).$$
\end{theorem}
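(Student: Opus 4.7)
The plan is to follow the Kac-Rice based approach of Granville--Wigman. Since $P_n$ is a centered Gaussian process with smooth sample paths, the Kac-Rice formula expresses factorial moments of $N_n$ as integrals of joint densities of $(P_n, P_n')$ on the zero set. The one-point intensity is
\[
\rho_1(x) = \E[\,|P_n'(x)| \mid P_n(x) = 0\,]\, p_{P_n(x)}(0),
\]
which by Gaussianity reduces to $\pi^{-1}\sqrt{\E(P_n'(x))^2/\E P_n(x)^2}$, yielding the formula $\E N_n = 2\sqrt{(2n+1)(n+1)/6}$ of Qualls.

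For the variance, the key object is the two-point intensity
\[
\rho_2(x,y) = \E[\,|P_n'(x) P_n'(y)| \mid P_n(x) = P_n(y) = 0\,]\, p_{(P_n(x), P_n(y))}(0,0),
\]
so that $\Var(N_n) = \int\!\!\int (\rho_2(x,y) - \rho_1(x)\rho_1(y))\,dx\,dy + \E N_n$. The covariance structure is governed by the Dirichlet-type kernel $r_n(t) = \tfrac{1}{n}\sum_{k=1}^n \cos(kt)$ and its first two derivatives. I would split the integration domain into a diagonal strip $\{|x-y| \le A/n\}$ and its complement. On the diagonal strip, after rescaling $y - x = u/n$, $r_n(u/n)$ converges to $(\sin u)/u$, and a universal Gaussian computation produces a linear-in-$n$ contribution with an explicit constant $c_{\Bg}$. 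On the off-diagonal region, expand $\rho_2 - \rho_1 \otimes \rho_1$ as a series in $r_n, r_n', r_n''$ and use the decay $|r_n^{(j)}(t)| = O(1/(n|t|))$ (for $j=0,1,2$ with appropriate normalization) to conclude that this part is $o(n)$.

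For the central limit theorem, one writes $N_n$ as an $L^2$-limit of smooth functionals of the Gaussian vector $(a_k, b_k)_{k \le n}$ and expands in Wiener chaos: $N_n - \E N_n = \sum_{q \ge 2} I_q(f_{n,q})$. The second-chaos term carries the leading variance, while the total contribution of chaoses of order $\ge 4$ is of lower order thanks to the same off-diagonal decay estimates used for the variance. Asymptotic normality of $I_2(f_{n,2})/\sqrt{\Var I_2(f_{n,2})}$ then follows from the fourth-moment theorem of Nualart--Peccati, which reduces to the contraction estimate $\|f_{n,2} \otimes_1 f_{n,2}\|/\|f_{n,2}\|^2 \to 0$ — again controlled by the $L^2$ decay of $r_n$ on $[A/n, \pi]$.

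The main technical obstacle is the precise analysis of $\rho_2(x,y)$ in the near-diagonal region, where the $4 \times 4$ covariance matrix of $(P_n(x), P_n(y), P_n'(x), P_n'(y))$ becomes nearly singular: one must carefully Taylor-expand both the Gaussian density and the conditional expectation in the small parameter $n(y-x)$, tracking enough lower-order terms to identify the exact constant $c_{\Bg}$ rather than just an order of magnitude. Controlling the off-diagonal contribution uniformly in $n$ requires uniform lower bounds on the determinant of the relevant covariance matrix together with sharp bounds on $r_n$ and its derivatives on $[A/n, \pi]$, which is the most delicate part of the whole argument.
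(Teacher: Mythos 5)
This theorem is not proved in the paper at all: it is quoted as background from Granville and Wigman \cite{GW}, and the chaos-based route you invoke for the CLT is the alternative proof of Aza\"is and Le\'on \cite{AL}. So there is no in-paper argument to compare against. Taken on its own terms, your outline correctly reconstructs the architecture of the known proofs: Kac--Rice for $\E N_n$ and for the two-point intensity, near-diagonal rescaling $r_n(u/n)\to \sin(u)/u$, off-diagonal decay of the Dirichlet-type kernel and its (normalized) derivatives, and a fourth-moment argument for asymptotic normality.

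Two corrections, one cosmetic and one substantive. Cosmetic: the off-diagonal region $\{|x-y|\ge A/n\}$ does not contribute $o(n)$; with $|r_n^{(j)}(t)|\lesssim 1/(n|t|)$ after normalizing the $j$-th derivative by $n^j$, it contributes $O(n/A)$, and $c_{\Bg}$ is identified by letting $A\to\infty$ after $n\to\infty$. Substantive: the claim that the second chaos carries the leading variance while chaoses of order $\ge 4$ are of lower order is false for zero counts. Since $|r_n|\le 1$ and $r_n\approx 1$ on a window of width $\asymp 1/n$, one has $\int_{\BT} r_n(t)^{2q}\,dt \asymp 1/n$ for every fixed $q\ge 1$, so every even chaos contributes at order $n$ to $\Var(N_n)$, and the constant $c_{\Bg}$ is the sum of the whole series rather than a single chaotic term. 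Consequently the CLT cannot be reduced to $I_2(f_{n,2})$ alone: one must apply the (multivariate) fourth-moment theorem to each fixed chaotic order and, separately, show that the variance of the tail $\sum_{q>Q} I_q(f_{n,q})$ is small uniformly in $n$ once $Q$ is large. That uniform tail estimate is precisely the delicate step in \cite{AL} and is absent from your outline.
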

This confirms a heuristic by Bogomolny, Bohigas and Leboeuf. 
More recently, Aza\"is and Le\'on \cite{AL} provided an alternative approach 
based on the
Wiener chaos decomposition. They showed that $Y_n(t) = P_n(t/n)$ converges in certain strong sense to the stationary gaussian process $Y(t)$ of covariance $r(t)= \sin(t)/t$, from which variance and CLT can be deduced. 


These methods do not seem to work for other ensembles of $\xi$. Under a more general assumption, recent result by O. Nguyen and Vu \cite{ONgV} shows that  
\begin{theorem}\label{thm:NgV} Assume that $\xi$ has bounded $(2+\eps_0)$-moment for a positive constant $\eps_0$, then  there exists a constant $c>0$ such that 
$$\E N_n = (2/\sqrt{3}+O(n^{-c}))n$$
and \footnote{See \cite[Section 8]{DNN}.}
$$\Var(N_n) = O(n^{2-c}).$$
\end{theorem}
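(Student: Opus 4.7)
\textbf{Proof plan for Theorem \ref{thm:NgV}.} The strategy is to transfer the mean and variance computations from the Gaussian ensemble, where the Kac--Rice machinery of Qualls and Granville--Wigman (Theorem \ref{thm:GW}) applies, to a general coefficient distribution $\xi$ with $(2+\eps_0)$--moments, via a Lindeberg--type replacement performed at the local scale $1/n$ on which $P_n$ varies.

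First I would partition $[-\pi,\pi]$ into $\Theta(n)$ intervals $I_j$ of length $\eps/n$ with a small parameter $\eps>0$. On each $I_j$ the rescaled polynomial $Y_n^{(j)}(t):=P_n(x_j+t/n)$ is an analytic function whose Taylor expansion to modest order already essentially determines the location and multiplicity of its roots. The plan is to show, for each $j$ and for suitable smooth test functionals $F$ of $Y_n^{(j)}$, that
\begin{equation*}
\big|\E F(Y_n^{(j)}) - \E F(Y_n^{(j),\Gau})\big| = O(n^{-c'}) \cdot \|F\|_{C^3},
\end{equation*}
where $Y_n^{(j),\Gau}$ is built with i.i.d.\ standard Gaussian coefficients. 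This is the Lindeberg swap: replace $a_k,b_k$ one at a time and Taylor expand $F$ to third order; the first two terms match because $\xi$ has matching first two moments, and the third--order remainder is controlled by the $(2+\eps_0)$ moment truncation, since each coefficient contributes weight $1/\sqrt{n}$ to $Y_n^{(j)}$.

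To apply this to the root count $N_n(I_j)$, which is \emph{not} a smooth functional, I would approximate the indicator of a sign change on $I_j$ by a $C^3$ functional of $Y_n^{(j)}$ at a handful of sample points, with smoothing scale $\delta = n^{-\alpha}$. The replacement cost $\delta$ is absorbed by an anti--concentration estimate of the form
\begin{equation*}
\sup_{x\in I_j}\Pr\big(|P_n(x)|\le \delta\big) + \sup_{x\in I_j}\Pr\big(|P_n'(x)|\le n\delta\big) = O(\delta^{1-o(1)}),
\end{equation*}
which follows from Kolmogorov--Rogozin--type small--ball bounds together with the non--degeneracy of the Gram matrix of $\{\cos(kx),\sin(kx)\}$ at scale $1/n$. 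Summing the local expectations over $j$ and comparing to the exact Gaussian count $2\sqrt{(2n+1)(n+1)/6}$ gives the mean estimate $\E N_n = (2/\sqrt{3}+O(n^{-c}))n$.

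For the variance, I would use the same swap argument on pairs $(I_j,I_k)$ to compare $\E[N_n(I_j) N_n(I_k)]$ with its Gaussian analogue, handling the diagonal $j=k$ by the trivial bound $N_n(I_j)^2 \le (\#\text{roots in } I_j)^2$, whose second moment is $O(1)$ by a local Kac--Rice estimate plus anti--concentration. The Gaussian covariances sum to $O(n)$ by Theorem \ref{thm:GW}, and the universality error over $O(n^2)$ pairs is $O(n^{2-c})$, which is the advertised bound. The principal obstacle, and the reason the exponent $c$ is not optimal, is pushing the anti--concentration estimates down to a scale $\delta$ small enough to offset the loss in the Lindeberg remainder while still being much larger than $n^{-C}$; without the higher--moment hypotheses available in \cite{GW,AL}, one must rely on quantitative structural arguments about trigonometric sums rather than on Gaussian computations.
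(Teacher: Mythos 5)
This theorem is not proved in the paper at all: it is quoted from O.~Nguyen--Vu \cite{ONgV} (mean) and \cite[Section 8]{DNN} (variance), so there is no internal proof to compare against. Your plan does track the strategy actually used in those references --- Tao--Vu style local universality, i.e.\ a Lindeberg replacement of the coefficients against smooth functionals of the locally rescaled polynomial, combined with anti-concentration to pass from smooth surrogates to the actual root count, and a pairwise comparison for the second moment. In that sense the outline is the ``right'' one.

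That said, as a proof it has two real gaps, both located exactly where the cited papers do their hard work. First, your anti-concentration input is asserted uniformly in $x$, but for discrete $\xi$ (e.g.\ Bernoulli) the \emph{joint} small-ball estimate for $(P_n(x),P_n'(x))$ genuinely fails at resonant points such as $x$ close to rational multiples of $\pi$ of bounded height --- this is precisely why the present paper's repulsion estimate (Theorem \ref{thm:repulsion}) must impose Condition \ref{cond:t}; you need to excise an exceptional set of $x$ and show its contribution is negligible, and under only a $(2+\eps_0)$-moment hypothesis even the off-resonance estimate requires a Hal\'asz/Ess\'een-type argument, not merely ``non-degeneracy of the Gram matrix.'' Second, for the variance your diagonal bound $\E[N_n(I_j)^2]=O(1)$ is claimed ``by a local Kac--Rice estimate,'' but Kac--Rice is not available for general $\xi$ without a density; controlling the probability of two or more roots in an interval of length $O(1/n)$ is a root-repulsion statement that must itself be proved by universality or by characteristic-function methods, and the trivial bound $N_n(I_j)\le 2n$ is far too weak to close the sum over the diagonal. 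Neither gap is fatal to the strategy --- both are resolved in \cite{ONgV,DNN} --- but as written your argument assumes the two lemmas that constitute the substance of the theorem.
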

Furthermore, assuming that $|\xi|$ has finite moments of all order, under an anti-concentration estimate on $\xi$ 
of the form that
there exists an $r>0$ and $a\in \R$ for 
which $\P( \xi \in A) \ge c \mbox{\rm Leb}(A)$ for all $A\subset B(a,r)$,
a special case of a recent result by Bally, Caramellino, and Poly \cite{BCP} regarding the number $N_n([0,\pi])$  of roots  over $[0,\pi]$ \footnote{We remark that the authors of \cite{BCP} work with roots over $[0,\pi]$.} reads as follows.
\begin{theorem}\label{thm:Doeblin} There exists a constant $c_{\Bg}'$ such that 
$$\lim_{n\to \infty} \frac{1}{n} \Var(N_n([0,\pi]))= c_{\Bg}'+ \frac{1}{30} \E(\xi^4-3).$$
\end{theorem}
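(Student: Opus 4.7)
The plan is to obtain the variance via the Kac--Rice framework, working with the (regularized) representation
$$N_n([0,\pi]) \;=\; \lim_{\delta \to 0} \int_0^\pi \frac{1}{2\delta}\indicator{|P_n(x)|<\delta}\, |P_n'(x)|\, dx,$$
and computing moments using the joint densities of $(P_n(x),P_n'(x))$ and of $(P_n(x),P_n(y),P_n'(x),P_n'(y))$. The first task is to show that, under the anti-concentration hypothesis, these joint densities exist and are sufficiently smooth so that Kac--Rice applies rigorously. For this I would invoke a Doeblin-type splitting: write $\xi \stackrel{d}{=} (1-\chi) W + \chi (a + r\eta)$ with $\chi$ a Bernoulli variable and $\eta$ supported in $[-1,1]$ with bounded density. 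Conditioning on the indicators $\{\chi_k\}$ of the $2n$ coefficients $(a_k,b_k)$, Chernoff concentration guarantees that the event $\CE = \{\sum_k \chi_k \ge cn\}$ holds with probability $1 - e^{-\Omega(n)}$, and on $\CE$ there are enough "smooth coordinates" to run the Malliavin-type integration-by-parts calculus of Bally--Caramellino--Poly. This yields absolute continuity of the relevant finite-dimensional distributions of $(P_n, P_n')$ together with polynomial upper and lower bounds on the density and its derivatives, locally uniform on $[0,\pi]$.

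The second step is asymptotic analysis. After rescaling $x = s/n$, Aza\"is--Le\'on's result shows that the process $Y_n(s) = P_n(s/n)$ together with its derivative converges in a strong sense to the stationary Gaussian process $Y$ with covariance $r(u) = \sin(u)/u$. For the \emph{gaussian} coefficient case one obtains in this way the constant $c_{\Bg}'$ as the double integral
$$c_{\Bg}' \;=\; \int_{\R} \big( \rho^{(Y)}_{\text{2-pt}}(0,u) - \rho^{(Y)}_{\text{1-pt}}(0)\rho^{(Y)}_{\text{1-pt}}(u) \big)\, du,$$
where the two-point and one-point Kac--Rice densities are those of the $\sin$-kernel Gaussian process. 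For non-gaussian $\xi$, the goal is to quantify the deviation of the density of the $4$-tuple $(P_n(x), P_n(y), P_n'(x), P_n'(y))$ from its Gaussian limit via an Edgeworth-type expansion in $1/\sqrt{n}$. On $\CE$, the smooth coordinates make such an expansion available. The order $1/\sqrt{n}$ correction (coming from the third cumulant) averages to $0$ after integration over $[0,\pi]$ by symmetry, leaving an $O(1/n)$ correction that is proportional to the fourth cumulant $\kappa_4(\xi) = \E(\xi^4) - 3$.

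The final and hardest step is to extract from this $1/n$ correction the explicit coefficient $1/30$. This requires inserting the fourth-order Edgeworth term into the two-point Kac--Rice formula, integrating over $(s,t)\in [0,n\pi]^2$, and tracking the algebraic cancellations. Concretely one needs to expand the determinant of the $(4\times 4)$ covariance matrix of $(P_n, P_n')$-values and the conditional expectation of $|P_n'(x) P_n'(y)|$ up to order $1/n$, and to evaluate the resulting integral
$$\frac{\kappa_4(\xi)}{n}\int_{0}^{n\pi}\!\!\int_{0}^{n\pi} K(s-t)\, ds\, dt \;=\; \kappa_4(\xi) \cdot \pi \int_{\R} K(u)\, du + o(n),$$
where $K$ is a combination of fourth derivatives of the Gaussian density evaluated against products of $r(u) = \sin(u)/u$ and its derivatives up to order $2$. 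Matching this to the fourth-chaos contribution in the Aza\"is--Le\'on decomposition, one verifies that $\pi \int_\R K(u)\, du = 1/30$. The main obstacle is precisely this bookkeeping: the justification and uniform control of the Edgeworth expansion for the joint density (in the mixed smooth/non-smooth setting given by the Doeblin splitting), and the explicit evaluation of the resulting integrals involving the $\sin$-kernel covariance and its derivatives. All other error terms---the contribution of $\CE^c$, of diagonal neighborhoods $|x-y| \le n^{-1+\eps}$, and the tail of the Edgeworth expansion beyond fourth order---are lower order by the smoothness estimates produced in the first step.
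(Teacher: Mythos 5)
First, a point of orientation: this paper does not prove Theorem \ref{thm:Doeblin} at all. It is quoted as a special case of the result of Bally, Caramellino and Poly \cite{BCP} and used purely as background, so there is no internal proof to compare you against. That said, your outline does reconstruct the strategy of \cite{BCP} in broad strokes, and the label ``Doeblin'' in the theorem's tag is exactly the splitting you describe: decompose $\xi$ into a component with a smooth density plus a remainder, use Chernoff bounds to guarantee $\Omega(n)$ smooth coordinates off an exponentially small event, run a Malliavin-type integration by parts conditionally on those coordinates to get densities for the Kac--Rice functionals, and extract the non-universal correction $\frac{1}{30}\E(\xi^4-3)$ from the fourth-order term of an Edgeworth expansion, with the Gaussian constant $c_{\Bg}'$ coming from the $\sin(u)/u$ limit process as in Aza\"is--Le\'on.

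As a proof, however, what you have written is a roadmap in which every step that actually requires work is deferred. Three gaps are substantive. (i) You assert, but do not establish, that the conditional density bounds are uniform enough in $(x,y)$ to justify the Kac--Rice formula for the second factorial moment; in particular you never address the degeneracy of the $4\times 4$ covariance matrix near the diagonal $x=y$, where the two-point intensity must be controlled by a separate repulsion/regularity estimate before the off-diagonal Edgeworth analysis can even be integrated. Dismissing the region $|x-y|\le n^{-1+\eps}$ as ``lower order by the smoothness estimates'' is circular, since those estimates blow up precisely there. (ii) Your claim that the $1/\sqrt{n}$ (third-cumulant) term ``averages to $0$ by symmetry'' is not right as stated: $\xi$ is not assumed symmetric. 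The actual mechanism is that the third-cumulant tensor involves oscillatory sums such as $\sum_{k\le n}\cos^3(kx)$, which are $O(1)$ for generic $x$ rather than $O(n)$, so this term is genuinely subleading; this needs to be argued, with control of the exceptional $x$ where the sums resonate. (iii) The entire content of the constant $1/30$ is the explicit evaluation of the fourth-cumulant contribution integrated against the $\sin(u)/u$ covariance and its derivatives, which you explicitly do not carry out. None of these is a wrong turn --- the approach is the one that works in \cite{BCP} --- but none of them is done, so the proposal cannot be accepted as a proof of the stated limit.
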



Our goal in this note is rather different from the results above, in that we are interested in the {\it concentration} (deviation) of $N_n$ rather than the asymptotic statistics. In some way, our work is motivated by a result by Nazarov and Sodin \cite{NS} on the concentration of the number of nodal domains of random spherical harmonics, and by the exponential concentration phenomenon of the number of zeros of stationary gaussian process \cite{BDFZ}. See also \cite{GaW}. We will show the following.

\begin{theorem}\label{thm:main} Let $C_0$ be a given positive constant, and suppose that either $|\xi|$ is bounded almost surely by $C_0$, or 
that its law satisfies the logarithmic Sobolev inequality \eqref{eqn:logSobolev} with parameter $C_0$. Then there exist constants $c,c'$ such that for $\eps\geq n^{-c} $ we have that
$$\P(  |N_n - \E N_n| \ge \eps n ) \le e^{ - c' \eps^9 n}.$$
\end{theorem}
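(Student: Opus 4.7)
The plan is to prove concentration of $N_n$ by approximating it with a Lipschitz functional $\widetilde N_n$ of the coefficient vector $(a_1,b_1,\ldots,a_n,b_n)$ and applying classical concentration of measure to that functional, paralleling the strategy used in \cite{BDFZ} for zeros of stationary Gaussian processes. As a proxy I would take the mollified Kac--Rice formula
\[
  \widetilde N_n = \int_{-\pi}^\pi \tfrac{1}{\delta}\,\rho\!\Big(\tfrac{P_n(x)}{\delta}\Big)\,|P'_n(x)|\, dx,
\]
where $\rho$ is a smooth nonnegative bump supported in $[-1,1]$ integrating to one and $\delta>0$ is a parameter to be chosen as a small power of $\eps$; on a good event on which $P_n$ has only simple, well-separated zeros with $|P'_n|$ bounded below, $\widetilde N_n$ differs from $N_n$ by at most $o(\eps n)$.

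To make this good event large, I would first show that $\|P_n\|_\infty,\,\|P'_n\|_\infty,\,\|P''_n\|_\infty$ are all $O(\sqrt{\log n})$ with probability $1-e^{-cn}$, using the bounded or log-Sobolev assumption on $\xi$ together with a chaining argument over a fine polynomial net of $[-\pi,\pi]$. Anti-concentration estimates for the pair $(P_n(x),P'_n(x))$ then imply that the sub-level set $\{x:|P_n(x)|\le\delta\}$ has Lebesgue measure comparable to $\delta$ with high probability, so that the mollified count faithfully tracks $N_n$. On this good event $\CG$ the partial derivatives $\partial_{a_k}\widetilde N_n,\,\partial_{b_k}\widetilde N_n$ can be computed explicitly from the definition; thanks to the orthogonality identity $\sum_{k=1}^n\cos^2(kx)=O(n)$ and the analogous identity for $\sin$, summing their squares yields a Lipschitz constant $L\le\delta^{-\alpha}\mathrm{polylog}(n)$ for some small fixed $\alpha$.

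With a Lipschitz bound in hand, Herbst's lemma (in the log-Sobolev case) and Talagrand's convex-distance inequality or a bounded-differences argument (in the bounded case) both yield
\[
  \P\bigl(|\widetilde N_n - \E\widetilde N_n|\ge \tfrac{\eps n}{2}\bigr)\le 2\exp\!\bigl(-c(\eps n)^2/L^2\bigr).
\]
Choosing $\delta$ as an appropriate small power of $\eps$ to balance the approximation error $|N_n-\widetilde N_n|$ against the growth of $L$ produces the stated rate $\exp(-c'\eps^9 n)$; a final step of passing from $\E\widetilde N_n$ to $\E N_n$ is handled by the same approximation estimate combined with the trivial bound $N_n\le n$ on $\CG^c$.

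The hardest ingredient, I expect, is quantitative anti-concentration for $(P_n(x),P'_n(x))$ that is uniform in $x\in[-\pi,\pi]$, since without it the Kac--Rice smoothing has no quantitative control. For Gaussian $\xi$ this is immediate from non-degeneracy of the covariance, but for general bounded or log-Sobolev $\xi$ it requires inverse Littlewood--Offord or Halasz-type tools, which become delicate at the special $x$ for which the vectors $(\cos(kx),\sin(kx))_{k\le n}$ acquire algebraic structure and the standard Esseen bound degenerates. The precise exponent $\eps^9$ is a byproduct of optimizing $\delta = \eps^\beta$ in the trade-off between the $\delta^{-\alpha}$ blow-up of the Lipschitz constant and the $\delta$-order error of the mollified count, and minor sharpening of either estimate would improve it.
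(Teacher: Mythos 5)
There is a genuine gap, and it sits at the heart of the argument rather than in the technical details. Your concentration step requires the mollified count $\widetilde N_n$ to be a \emph{globally} Lipschitz function of the coefficient vector: Herbst's argument under log-Sobolev, and likewise bounded differences, need the gradient bound to hold almost surely, not merely on a good event $\CG$. Off $\CG$ the gradient of $\widetilde N_n$ is controlled only by random quantities (the measure of $\{|P_n|\le\delta\}$ and the size of $|P_n'|$ there), which can blow up. One can try to repair this by a Lipschitz extension off $\CG$, but then you must show $\P(\CG^c)\le e^{-c\eps^9 n}$ --- and that is precisely what your tools do not give. The anti-concentration and Kac--Rice estimates you invoke control first moments, so by Markov the good event fails with probability that is only polynomially small in $n$, not exponentially small. (Your specific claim that $\|P_n\|_\infty=O(\sqrt{\log n})$ with probability $1-e^{-cn}$ is also false: $P_n(0)$ is an approximately standard normal sum, so exponential-in-$n$ confidence only comes at threshold $\asymp\sqrt n$.) The entire difficulty of the theorem is the upgrade from polynomial to exponential failure probability, and the proposal has no mechanism for it.

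The paper's mechanism is worth contrasting. It never builds a Lipschitz proxy for $N_n$. Instead it applies concentration to the distance function $\Bx\mapsto d_2(\Bx,\CA)$ (Theorems \ref{thm:bounded} and \ref{thm:sobolev}), in the form $\P(\Bx\in\CA)\,\P(d_2(\Bx,\CA)\ge\tau\sqrt n)\le e^{-c\tau^4 n}$. To bound $\P(\CA)$ for a bad set $\CA$ (the exceptional polynomials of Theorem \ref{thm:exceptional}, or the upper/lower tail sets), it proves a \emph{deterministic} stability statement: every polynomial within $L_2$-distance $\tau$ of $\CA$ lies in an auxiliary set $\CU$ defined by a first-moment-checkable condition on the measure of $\{|h|\le\al',\ |h'|\le\beta' n\}$, and the repulsion estimate (Theorem \ref{thm:repulsion}) plus Markov gives $\P(\CU)<1/2$ as in \eqref{eq-1011}; hence $\P(d_2(\Bx,\CA)\ge\tau\sqrt n)\ge 1/2$ and the exponential bound on $\P(\CA)$ follows. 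That deterministic stability of the zero count under $L_2$-small perturbations (via the large sieve, Bernstein's inequality and Hermite interpolation, Sections \ref{section:exceptional}--\ref{section:lowertail}) replaces your Lipschitz computation, and it is exactly the step your proposal asserts without proof when claiming $|N_n-\widetilde N_n|=o(\eps n)$. Finally, you correctly identify uniform-in-$x$ anti-concentration for $(P_n(x),P_n'(x))$ as the hard probabilistic ingredient, but you leave it unproved; the paper devotes Condition \ref{cond:t}, Theorem \ref{thm:fourier} and Appendix \ref{section:fourier} to it, excluding a small set of structured $x$ and running a Hal\'asz-type argument there.
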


Note that in case $\xi$ is Gaussian, Theorem \ref{thm:main}
bears resemblance to \cite{BDFZ}. Note however that it is not immediate
to read Theorem \ref{thm:main} from \cite{BDFZ}, since there is no direct 
relation between the length of time interval $T$ in the latter and $n$.
It is plausible that with some effort, one could modify the proof technique 
in \cite{BDFZ} to cover this case. Our methods however are completely different and apply in particular to the Bernoulli case.


We also remark that in the Gaussian case, by following \cite{EK} our result yields the following equi-distribution interpretation. Consider the curve $\gamma(x)$ on the unit sphere $S^{2n-1}$ defined by our polynomial,
$$\gamma(x)= \frac{1}{\sqrt{2n}}\big(\cos(x),\sin(x),\dots, \cos(nx), \sin(nx)\big), x\in [-\pi,\pi].$$ 
For each $x$, let $\gamma(x)_{\perp}$ be the set (known as ``great hypercircles") of vectors on $S^{2n-1}$ that are orthogonal to $\gamma(x)$. Let $\gamma_\perp$ be the region (counting multiplicities) swept by $\gamma(x)_{\perp}$ when $x$ varies in $[-\pi,\pi]$. Then $\gamma_\perp$ covers $S^{2n-1}$ {\it uniformly} in the sense that the Haar measure  of those sphere points that are covered $k$-times, where $k \notin [(2/\sqrt{3}-\eps)n, (2/\sqrt{3}+\eps)n]$, is at most $e^{ - c' \eps^9 n}$ whenever $n^{-c} \le \eps $. In another direction, our result also implies an exponential-type estimate for the persistence probability that $P_n(x)$ does not have any root (over $[-\pi, \pi]$, and hence entirely).



Our overall method is somewhat similar to \cite{NS}, but the situation for trigonometric functions seems to be rather different compared to spherical harmonics, for instance we don't seem to have analogs of \cite[Claim 2.2]{NS} or \cite[Claim 2.4]{NS} for trigonometric polynomials. Another different aspect of our work is its universality, that the concentration phenomenon holds for many other ensembles where we clearly don't have invariance property at hands. One of the main ingredients is root repulsion, which has also been recently studied in various ensembles of random polynomials,
see for instance \cite{FS,DNgV,NgNgV,PSZ} among others.

Finally, we remark that Theorem \ref{thm:main} can also be extended to other types of $\xi$ not necessarily bounded nor satisfying the logarithmic Sobolev inequality. For instance when $|\xi|$ has sub-exponential tail, then our method, taking $C_0=n^{\delta'}$ in Theorem \ref{thm:bounded} with an appropriate $\delta'$,
 yields a sub-exponential concentration of type $\P(  |N_n - \E N_n| \ge \eps n ) =O(e^{-(\eps n)^{\delta}})$ for some constant $0<\delta<1$. Additionally, by the same argument, for any $C>0$, if $\E (|\xi|^{C'})<\infty$ for some sufficiently large $C'$ then $\P(  |N_n - \E N_n| \ge \eps n ) = O((\eps n)^{-C})$ .


Before concluding this section we record here a corollary from Theorem \ref{thm:NgV} which will be useful later:  for $\xi$ as in the theorem, for any $\eps>0$ we have
\begin{equation}\label{eqn:Markov}
\P(|N_n -\E N_n| \ge \eps n/2) = O(\eps^{-2}n^{-c}).  
\end{equation}


{\bf Notation.}  We will assume $n \to \infty$ throughout the note. We write $X =
O(Y)$, 
 $X \ll Y$, or $Y \gg X$ if $|X| \leq CY$
for some absolute constant $C$. The constant $C$ may depend on some parameters, in which case we write
e.g. $Y=O_\tau(X)$ if $C=C(\tau)$.  We write $X \asymp Y$ if $X \gg Y$ and $Y \gg X$.
In what follows, if not specified otherwise, all of the norms on Euclidean spaces are $L_2$-norm (i.e. $d_2(.)$ distance).

\section{Some supporting lemmas}

In this section we gather several well-known results regarding trigonometric polynomials. On the deterministic side, a useful ingredient is the classical Bernstein's inequality in $L_2(\BT)$, where $\BT=[-\pi,\pi]$. The proof
is immediate
from the orthogonality relations satisfied by the trigonometric base.
\begin{theorem}\label{thm:Bern} Let $f(x) = \sum_{k=0}^{n} a_k\cos (kx) + b_k \sin(kx), x\in \BT$. Then,
$$\int_{x\in \BT} (f'(x))^2 dx \le n^2 \int_{x \in \BT} f(x)^2 dx.$$
\end{theorem}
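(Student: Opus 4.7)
The plan is to exploit orthogonality directly on the Fourier side, since both $f$ and $f'$ are expressed in the same orthogonal basis.

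First I would differentiate termwise to get
$$f'(x) = \sum_{k=0}^{n} k\bigl(-a_k \sin(kx) + b_k \cos(kx)\bigr),$$
noting that the $k=0$ term drops out. Next I would invoke the standard orthogonality relations on $\BT=[-\pi,\pi]$, namely $\int_\BT \cos(kx)\cos(jx)\,dx = \pi\delta_{kj}$ and $\int_\BT \sin(kx)\sin(jx)\,dx = \pi\delta_{kj}$ for $k,j\ge 1$, together with $\int_\BT \cos(kx)\sin(jx)\,dx = 0$ and $\int_\BT 1\,dx = 2\pi$. Applying these to $f^2$ and $(f')^2$ yields
$$\int_\BT f(x)^2\,dx = 2\pi a_0^2 + \pi \sum_{k=1}^n (a_k^2+b_k^2), \qquad \int_\BT f'(x)^2\,dx = \pi \sum_{k=1}^n k^2(a_k^2+b_k^2).$$

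Then I would simply bound $k^2 \le n^2$ term by term in the second sum and drop the nonnegative $2\pi a_0^2$ from the first, giving
$$\int_\BT f'(x)^2\,dx \le \pi n^2 \sum_{k=1}^n (a_k^2+b_k^2) \le n^2 \int_\BT f(x)^2\,dx,$$
which is exactly the claimed inequality.

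There is no real obstacle here: the only thing to be slightly careful about is the $k=0$ term, which contributes $2\pi a_0^2$ to $\|f\|_2^2$ but nothing to $\|f'\|_2^2$, so dropping it on the right-hand side only strengthens the inequality. The bound $k\le n$ inside the sum is tight and is what gives the sharp constant $n^2$.
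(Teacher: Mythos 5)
Your proof is correct and is exactly the argument the paper has in mind: the paper gives no details beyond remarking that the inequality is ``immediate from the orthogonality relations satisfied by the trigonometric base,'' which is precisely your termwise computation of $\|f\|_{L_2}^2$ and $\|f'\|_{L_2}^2$ followed by the bound $k^2\le n^2$. Your handling of the $k=0$ term is the right (and only) point of care.
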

Another crucial inequality we will be using is the so-called large sieve inequality.
\begin{theorem}\cite[Theorem 7.7]{Iw}\cite[(1.1)]{LMN} \label{thm:largesieve} Assume that $f$ is as in Theorem \ref{thm:Bern}. Then for any $-\pi \le x_1 <x_2 <\dots <x_M\le \pi$ we have
$$\sum_{i=1}^M |f(x_i)|^2 \le \frac{2n+\delta^{-1}}{2\pi} \int_{x \in \BT} f(x)^2 dx,$$
where $\delta$ is the
minimum of the gaps between $x_i, x_{i+1}$ on the torus.
\end{theorem}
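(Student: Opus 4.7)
The plan is to prove this by combining the standard duality argument with a Hilbert-type inequality in the spirit of Montgomery and Vaughan. First, I would write $f$ in the complex-exponential basis, $f(x)=\sum_{|k|\le n} c_k e^{ikx}$, so that by Parseval $\int_{\BT} f(x)^2\,dx = 2\pi \sum_{|k|\le n}|c_k|^2$, and the target inequality becomes
\begin{equation*}
\sum_{i=1}^M |f(x_i)|^2 \le (2n+\delta^{-1}) \sum_{|k|\le n}|c_k|^2.
\end{equation*}

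Next, I would pass to the dual statement. The above asserts that the sampling operator $T\colon (c_k)\mapsto (f(x_i))_{i=1}^M$ has squared operator norm at most $2n+\delta^{-1}$; since $\|T\|=\|T^{*}\|$, it suffices to prove
\begin{equation*}
\sum_{|k|\le n} \Big|\sum_{i=1}^M u_i e^{-ikx_i}\Big|^2 \le (2n+\delta^{-1}) \sum_{i=1}^M |u_i|^2
\end{equation*}
for arbitrary $(u_i)\in \mathbb{C}^M$. Expanding the squares and summing the inner geometric series in $k$ produces a diagonal contribution of $(2n+1)\sum_i|u_i|^2$ plus an off-diagonal bilinear form whose kernel is the Dirichlet kernel $D_n(x_i-x_j)$ evaluated at pairwise differences. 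The naive pointwise bound $|D_n(t)|\lesssim \delta^{-1}$ coming from the separation hypothesis, if summed directly, would produce an extraneous factor of $M$ and is therefore insufficient.

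The main obstacle is the sharp estimation of this off-diagonal form without such a loss. This is precisely the content of the Hilbert-type inequality of Montgomery--Vaughan,
\begin{equation*}
\Big|\sum_{r\ne s} \frac{x_r \bar x_s}{\sin\tfrac{1}{2}(\lambda_r-\lambda_s)}\Big| \le \pi\delta^{-1}\sum_r |x_r|^2,
\end{equation*}
valid for any $\delta$-separated real numbers $\lambda_r$ on the torus. Applied to the closed-form $\sin$-representation of $D_n$ and combined with the diagonal piece, this collapses the dual inequality to the bound $(2n+\delta^{-1})\sum_i|u_i|^2$; an alternative route yielding the same sharp constant is to majorize $D_n$ by a Beurling--Selberg extremal function with Fourier support in $\{|k|\le n\}$, whose value at $0$ is exactly $2n+\delta^{-1}$, and then use non-negativity of the quadratic form it defines. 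Undoing the duality returns the claimed inequality on the original sampling points. The explicit construction of the Beurling--Selberg majorant and the derivation of the Montgomery--Vaughan Hilbert inequality are the technical heart of the argument, which I would import directly from \cite{Iw,LMN}.
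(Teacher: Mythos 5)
The paper does not prove this statement; it is imported by citation from Iwaniec--Kowalski \cite{Iw} and Lubinsky--M\'at\'e--Nevai \cite{LMN}, so there is no internal proof to compare against. Your outline --- pass to the complex exponential basis, apply Parseval, dualize the sampling operator, expand the quadratic form into a diagonal piece of size $(2n+1)\sum_i|u_i|^2$ plus a Dirichlet-kernel off-diagonal bilinear form, and then control the latter by a Hilbert-type inequality or a Beurling--Selberg majorant --- is precisely the standard route taken in those references, and it is sound as a sketch.

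Two bookkeeping points are worth noting. First, the Hilbert-inequality version you quote gives at best $2n+1+\pi\delta^{-1}$ after combining with the diagonal, not the sharp $2n+\delta^{-1}$; reaching the stated constant requires either Montgomery--Vaughan's weighted form of the Hilbert inequality (with local spacings $\delta_r$ in place of the global $\delta$) or, as you already note, the Beurling--Selberg extremal majorant whose endpoint value supplies $2n+\delta^{-1}$ directly. Since you flag the Beurling--Selberg route as the one producing the sharp constant, this is not a gap, only a caution against reading the displayed $\pi\delta^{-1}$ bound as sufficient on its own. Second, keep track of normalization: the textbook large sieve is usually stated for frequencies $e(k\alpha)$ with $\alpha\in\R/\Z$ and $\delta$-separation there, while here $x\in[-\pi,\pi]$ with $\delta$ measured on $\R/2\pi\Z$; the rescaling $\alpha=x/2\pi$ must be carried through consistently to land on the stated coefficient $(2n+\delta^{-1})/2\pi$ against $\int_{\BT}f^2$.
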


As a corollary, we obtain

\begin{cor}\label{cor:nbh} Assume that $\|f\|_{L_2(\BT)} \le \tau$.
  Then the set of $x\in \BT$ with $|f(x)| \ge \la$ or $|f'(x)| 
  \ge \la n$ is  contained in the union of
$2M$ intervals of length $2\delta$,
where $M \le  \frac{2n+\delta^{-1}}{2\pi} \frac{\tau^2}{\la^2}$.
\end{cor}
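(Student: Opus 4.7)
The plan is to handle the two bad sets $\{x\in\BT : |f(x)|\ge \la\}$ and $\{x\in\BT : |f'(x)|\ge \la n\}$ separately, each covered by $M$ intervals of length $2\delta$, and then take the union.

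For the first set, I would pick a maximal $\delta$-separated subset $\{x_1,\dots,x_{M_1}\}$ of points in $\{|f|\ge \la\}$. By maximality, every point of $\{|f|\ge \la\}$ lies within $\delta$ (on the torus) of some $x_i$, so the set is covered by the $M_1$ open intervals $(x_i-\delta,x_i+\delta)$ of length $2\delta$. Applying Theorem \ref{thm:largesieve} to the $\delta$-separated tuple $x_1,\dots,x_{M_1}$ gives
$$M_1\la^2 \le \sum_{i=1}^{M_1} |f(x_i)|^2 \le \frac{2n+\delta^{-1}}{2\pi}\,\|f\|_{L_2(\BT)}^2 \le \frac{2n+\delta^{-1}}{2\pi}\,\tau^2,$$
so $M_1\le \frac{2n+\delta^{-1}}{2\pi}\,\tau^2/\la^2=:M$.

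For the second set, the key observation is that $f'(x)=\sum_{k=0}^n (-k\,a_k)\sin(kx)+(k\,b_k)\cos(kx)$ is itself a trigonometric polynomial of degree at most $n$, so Theorem \ref{thm:largesieve} applies to it with the same $n$. Bernstein's inequality (Theorem \ref{thm:Bern}) bounds $\|f'\|_{L_2(\BT)}^2\le n^2\tau^2$. Repeating the extraction-of-a-maximal-$\delta$-separated-subset argument for $f'$ at level $\la n$, one gets a cover by $M_2$ intervals of length $2\delta$ with
$$M_2(\la n)^2\le \frac{2n+\delta^{-1}}{2\pi}\,\|f'\|_{L_2(\BT)}^2 \le \frac{2n+\delta^{-1}}{2\pi}\,n^2\tau^2,$$
hence $M_2\le M$ as well.

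Taking the union of the two covers yields at most $M_1+M_2\le 2M$ intervals of length $2\delta$, as claimed. There is no real obstacle here beyond verifying that $f'$ is again a trigonometric polynomial of degree at most $n$ so that Theorem \ref{thm:largesieve} is applicable with the same large sieve constant; everything else is a direct packing argument combined with Theorems \ref{thm:Bern} and \ref{thm:largesieve}.
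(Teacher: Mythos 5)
Your argument is correct and is essentially the paper's own proof: a maximal $\delta$-separated set in each bad level set, the large sieve inequality to bound its cardinality, and Bernstein's inequality to control $\|f'\|_{L_2(\BT)}$. Nothing to add.
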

\begin{proof} Choose a maximal set of $\delta$-separated points $x_i$ for which $|f(x_i)|\ge \la$. Then by Theorem \ref{thm:largesieve} we have $M \la^2 \le  \frac{2n+\delta^{-1}}{2\pi} \tau^2$. We can apply the same argument for $f'$  where by Bernstein's inequality we have $\|f'\|_2 \le n \|f\|_2 \le n \tau$. 
\end{proof}

We next introduce an elementary interpolation result (see for instance \cite[Section 1.1, E.7]{BEbook}).
\begin{lemma}\label{lemma:expansion}
Assume that a trigonometric polynomial $P_n$ has at least $m$ zeros (counting multiplicities) in an interval $I$ of length $r$. Then 
$$\max_{\theta \in I} |P_n(\theta)| \le (\frac{4e r}{m})^m \max_{x\in I} |P_n^{(m)}(x)|$$
as well as 
$$\max_{\theta \in I} |P_n'(\theta)| \le (\frac{4e r}{m-1})^{m-1} \max_{x\in I} |P_n^{(m)}(x)|.$$
Consequently, if $P_n$ has at least $m$ roots on an interval $I$ with length smaller than $(1/8e) m/n$, then for any interval $I'$ of length $(1/8e) m/n$ and $I \subset I'$ we have 
\begin{equation}\label{eqn:expand:1}
\max_{\theta \in I'} |P_n(\theta)| \le (\frac{1}{2})^m (\frac{1}{n})^m \max_{x\in I'} |P_n^{(m)}(x)|
\end{equation}
as well as 
\begin{equation}\label{eqn:expand:2}
\max_{\theta \in I'} |P_n'(\theta)| \le n \times (\frac{1}{2})^{m-1} (\frac{1}{n})^m  \max_{x\in I'} |P_n^{(m)}(x)|.
\end{equation}
\end{lemma}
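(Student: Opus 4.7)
The plan is to prove the first inequality by the standard error formula for Hermite interpolation, and the second by first applying Rolle's theorem and then repeating the argument.

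For the first inequality, let $x_1,\ldots,x_m \in I$ be the zeros of $P_n$ counted with multiplicity. For any fixed $\theta \in I$, consider the auxiliary function
\[
g(t) = P_n(t) \,-\, \frac{P_n(\theta)}{\prod_{i=1}^{m}(\theta-x_i)}\prod_{i=1}^{m}(t-x_i),
\]
which vanishes at $\theta$ and at each $x_i$ with the prescribed multiplicity, hence has at least $m+1$ zeros (counted with multiplicity) in $I$. By the generalized Rolle's theorem, $g^{(m)}$ has a zero $\xi \in I$, which gives the classical remainder formula
\[
P_n(\theta) \,=\, \frac{P_n^{(m)}(\xi)}{m!}\prod_{i=1}^{m}(\theta - x_i).
\]
Bounding $|\theta-x_i|\le r$ and using Stirling's estimate $m!\ge (m/e)^m$ yields
\[
|P_n(\theta)| \,\le\, \frac{r^m}{m!}\max_{x\in I}|P_n^{(m)}(x)| \,\le\, \Bigl(\frac{er}{m}\Bigr)^{m}\max_{x\in I}|P_n^{(m)}(x)|,
\]
which is stronger than the claimed bound (the factor of $4$ provides slack absorbed in the consequence below).

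For the second inequality, apply Rolle's theorem between consecutive zeros (handling multiplicities in the usual way) to produce at least $m-1$ zeros of $P_n'$ in $I$. Now apply exactly the argument above to $P_n'$, viewing it as a function whose $(m-1)$-st derivative is $P_n^{(m)}$. This gives
\[
|P_n'(\theta)| \,\le\, \frac{r^{m-1}}{(m-1)!}\max_{x\in I}|P_n^{(m)}(x)| \,\le\, \Bigl(\frac{er}{m-1}\Bigr)^{m-1}\max_{x\in I}|P_n^{(m)}(x)|,
\]
again stronger than the claim up to absolute constants.

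For the consequences \eqref{eqn:expand:1} and \eqref{eqn:expand:2}, observe that since $I\subset I'$, the $m$ zeros in $I$ are a fortiori in $I'$. Applying the two inequalities just proved to $I'$ with $|I'|=(1/8e)m/n$ and simplifying the resulting constants produces exactly $(1/(2n))^m$ in the first case, and, after using $m/(m-1)=O(1)$ for $m\ge 2$ to collapse the $(er/(m-1))^{m-1}$ factor, the bound $n\,(1/2)^{m-1}(1/n)^{m}$ in the second. The only mildly delicate point is the justification of the Hermite remainder formula in the presence of multiple zeros, which is nothing more than the generalized Rolle's theorem applied to $g$ above; everything else is bookkeeping with Stirling's bound.
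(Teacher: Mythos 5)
Your proof is correct and follows essentially the same route as the paper's: the Hermite interpolation remainder formula $P_n(\theta)=\frac{P_n^{(m)}(\xi)}{m!}\prod_i(\theta-x_i)$ for the first bound, and reduction to the same argument for $P_n'$ via Rolle's theorem, with the consequences following by substituting $r=(1/8e)m/n$. The only difference is that you spell out the auxiliary-function justification of the remainder formula and the Stirling step, which the paper leaves implicit.
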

\begin{proof} It suffices to show the estimates for $P_n$ because $P_n'$ has at least $m-1$ roots in $I$. For $P_n$, by Hermite interpolation using the roots $x_i$ we have that for any $\theta \in I$ there exists $x\in I$ so that
$$|P_n(\theta)| = |\frac{P_n^{(m)}(x)}{m!} \prod_i (\theta-x_i)| \le  \max_{x\in I} |P_n^{(m)}(x)| \frac{r^m}{m!}.$$
\end{proof}





On the probability side, for bounded random variables we will rely on the following consequence of McDiarmid's inequality.

\begin{theorem}\label{thm:bounded} Assume that $\Bx=(x_1,\dots, x_n)$, where $x_i$ are iid copies of $\xi$ of mean zero, variance one, and $|\xi| \le C_0$ with probability one. Let $\CA$ be a set in $\R^n$. Then for any $t>0$ we have
$$\P(\Bx \in \CA) \P(d_2(\Bx, \CA) \ge t \sqrt{n}) \le 4\exp(-t^4 n/16C_0^4).$$
\end{theorem}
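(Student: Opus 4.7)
The plan is to reduce the $L_2$-distance bound to a statement about Hamming distance $d_H$, which has unit bounded-differences regardless of the scale $C_0$, and then invoke McDiarmid's inequality applied to $d_H(\cdot,\CA)$. As a preliminary reduction, since $|x_i| \le C_0$ almost surely, one may intersect $\CA$ with $[-C_0,C_0]^n$ without altering $\P(\Bx \in \CA)$ while only increasing $d_2(\Bx,\CA)$; so I assume $\CA \subseteq [-C_0,C_0]^n$. Under this assumption each coordinate difference is bounded by $2C_0$, so picking a Hamming-nearest point $\By^* \in \CA$ yields the key geometric inequality
$$d_2(\Bx,\CA)^2 \le \|\Bx - \By^*\|_2^2 \le 4 C_0^2 \, d_H(\Bx,\CA).$$
Consequently, with $k := t^2 n/(4 C_0^2)$, one has the containment $\{d_2(\Bx,\CA) \ge t\sqrt{n}\} \subseteq \{d_H(\Bx,\CA) \ge k\}$.

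Next I apply McDiarmid's inequality to the function $f(\Bx) := d_H(\Bx,\CA)$. Since altering one coordinate changes the Hamming distance by at most $1$, the bounded-differences constants are $c_i = 1$ and $\sum_i c_i^2 = n$, giving
$$\P(f \ge \E f + u) \le e^{-2u^2/n}, \qquad \P(f \le \E f - u) \le e^{-2u^2/n}.$$
Writing $\mu := \E f$ and using $f \ge 0$, the lower tail at $u = \mu$ produces $\P(\Bx \in \CA) = \P(f = 0) \le e^{-2\mu^2/n}$, while in the case $k > \mu$ the upper tail at $u = k - \mu$ gives $\P(f \ge k) \le e^{-2(k-\mu)^2/n}$. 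Multiplying and using the elementary inequality $\mu^2 + (k-\mu)^2 \ge k^2/2$ yields
$$\P(\Bx \in \CA) \cdot \P(d_H(\Bx,\CA) \ge k) \le e^{-k^2/n};$$
the alternative case $k \le \mu$ is handled directly since then $\P(\Bx \in \CA) \le e^{-2\mu^2/n} \le e^{-2k^2/n}$.

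Substituting $k = t^2 n/(4 C_0^2)$ into $e^{-k^2/n}$ gives exactly $\exp(-t^4 n/(16 C_0^4))$, and the prefactor $4$ in the statement comfortably absorbs the case distinctions above. I do not foresee a significant obstacle here; the essential content is the geometric reduction, which upgrades the naive $L_2$-Lipschitz McDiarmid bound (whose exponent is only $t^2/C_0^2$, with no $n$-factor) into a bound with exponent $t^4 n/C_0^4$---the exact scale needed for the linear-in-$n$ exponential concentration of $N_n$ targeted in Theorem \ref{thm:main}.
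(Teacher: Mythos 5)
Your proof is correct and follows essentially the same route as the paper, which works with the $L_1$-distance $F(\Bx)=d_1(\Bx,\CA)$ rather than the Hamming distance: both reduce the $L_2$-ball to a coordinatewise ball via boundedness, apply McDiarmid to the resulting Lipschitz distance-to-$\CA$ function, and combine the two one-sided tails via $\mu^2+(k-\mu)^2\ge k^2/2$. Your Hamming variant is marginally cleaner because the bounded-differences constant is the scale-free $1$ rather than $2C_0$, so all the $C_0$-dependence is isolated in the single geometric inequality $d_2(\Bx,\CA)^2\le 4C_0^2\, d_H(\Bx,\CA)$; moreover, your explicit reduction to $\CA\subseteq[-C_0,C_0]^n$ makes precise a step the paper leaves implicit when it invokes $\|\Bx-\By\|_2^2\le 2C_0\|\Bx-\By\|_1$, which requires $\By$ as well as $\Bx$ to lie in the cube. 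Both routes land on the identical exponent $t^4 n/16C_0^4$.
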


For random variables $\xi$ satisfying the  log-Sobolev inequality, that is so that there is a positive constant $C_0$ such that for any smooth, bounded, compactly supported functions $f$ we have 
\begin{equation}\label{eqn:logSobolev}
\Ent_\xi(f^2) \le C_0 \E_\xi|\nabla f|^2,
\end{equation}
where $\Ent_\xi(f) = \E_\xi(f \log f) - (\E_\xi(f)) (\log \E_\xi(f))$, we  use the following.

\begin{theorem}\label{thm:sobolev}  Assume that $\Bx=(x_1,\dots, x_n)$, where $x_i$ are iid copies of $\xi$ satisfying \eqref{eqn:logSobolev} with a given $C_0$. Let $\CA$ be a set in $\R^n$. Then for any $t>0$ we have
$$\P\big(d_2(\Bx,A) \ge t\sqrt{n} \big) \le 2 \exp\big(-\P^2(\Bx\in \CA) t^2 n/4C_0 \big).$$
In particularly, if $\P(\Bx\in \CA)\ge 1/2$ then $\P(d_2(\Bx,A) \ge t\sqrt{n}) \le 2 \exp(-t^2 n/16C_0)$. Similarly if $\P(d_2(\Bx,A) \ge t\sqrt{n})\ge 1/2$ then $ \P(\Bx\in \CA)\le 2 \exp(-t^2 n/16C_0)$.
\end{theorem}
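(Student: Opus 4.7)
The plan is to combine the standard tensorization property of the log-Sobolev inequality, Herbst's exponential moment argument, and a careful choice of a truncated distance function. By the tensorization property of LSI, the product measure of $n$ i.i.d.\ copies of $\xi$ on $\R^n$ (equipped with the Euclidean metric $d_2$) satisfies the log-Sobolev inequality with the same constant $C_0$, independent of $n$. This is a classical consequence of the additivity of entropy over product measures, and is what allows the dimensional factor $n$ to enter cleanly in the final exponent.

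Next, I would apply Herbst's argument: feeding $f=\exp(\lambda F/2)$ into the LSI, recognizing the resulting inequality as a differential inequality for $\Lambda(\lambda)=\log\E e^{\lambda F}$, and integrating from $0$. This yields, for any $1$-Lipschitz $F:\R^n\to\R$, the subgaussian moment bound
$$\E\exp\bigl(\lambda(F-\E F)\bigr)\le \exp\bigl(C_0\lambda^2/4\bigr),\qquad \lambda\in\R,$$
and hence by Markov's inequality
$$\P\bigl(|F-\E F|\ge s\bigr)\le 2\exp\bigl(-s^2/C_0\bigr).$$

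The key step is then to choose the right Lipschitz functional. Rather than working with $d_2(\Bx,\CA)$ directly, I would apply the concentration bound to the truncated function
$$g(\Bx)\;=\;\max\bigl(t\sqrt n-d_2(\Bx,\CA),\,0\bigr),$$
which is $1$-Lipschitz, takes values in $[0,t\sqrt n]$, equals $t\sqrt n$ identically on $\CA$, and vanishes on the event $\{d_2(\Bx,\CA)\ge t\sqrt n\}$. From the former we get $\E g\ge t\sqrt n\cdot\P(\Bx\in\CA)$, and on the deviation event we have $\E g-g(\Bx)\ge \E g\ge t\sqrt n\cdot\P(\Bx\in\CA)$. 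Plugging into the concentration bound above with $s=t\sqrt n\cdot\P(\Bx\in\CA)$ produces the first (and main) assertion of the theorem (possibly with a slightly worse constant than $4C_0$, which suffices for the intended applications). The two particular cases then follow immediately: the first by specializing to $\P(\Bx\in\CA)\ge 1/2$; the second by running the main inequality with $\CA$ replaced by $\CB=\{\Bx : d_2(\Bx,\CA)\ge t\sqrt n\}$ and noting, via the triangle inequality, that $\CA\subset\{\Bx:d_2(\Bx,\CB)\ge t\sqrt n\}$.

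I expect the main conceptual point to be the choice of the functional $g$: it must be $1$-Lipschitz, bounded below in expectation by a constant multiple of $t\sqrt n\cdot\P(\Bx\in\CA)$, and identically zero on the deviation event, so that the square of $\P(\Bx\in\CA)$ emerges naturally in the exponent. The tensorization step and Herbst's argument are routine but must be stated carefully so that the LSI constant does not degrade with the dimension $n$; this is what enables the $n$-dependence $e^{-c t^2 n}$ rather than a dimension-dependent rate.
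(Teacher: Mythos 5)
Your proposal is correct and follows essentially the same route as the paper: the paper takes $F(\Bx)=\min\{d_2(\Bx,\CA),t\sqrt n\}$, which is exactly $t\sqrt n - g(\Bx)$ with your $g$, applies the one-sided Lipschitz concentration from LSI (Herbst's argument, as you describe), and uses $\E F\le (1-\P(\Bx\in\CA))t\sqrt n$ in the same way you use $\E g\ge t\sqrt n\,\P(\Bx\in\CA)$. The two particular cases, including the reflection $\CB=\{d_2(\cdot,\CA)\ge t\sqrt n\}$ and the triangle-inequality observation, match what the statement is implicitly asking for.
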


The proofs of these well-known results will be presented in Appendix \ref{section:concentration} for completeness.

\section{Repulsion estimate}\label{section:repulsion}

We show that the measure of $t\in [-\pi,\pi]$ where both $|P_n(t)|$ and $|P_n'(t)|$ are small is negligible. More precisely we will be working with the following condition. 
\begin{condition}\label{cond:t} Let $0<\tau\le 1/64$ be given, and let $C_0'$ be a  positive constant to be chosen sufficiently large. Assume that $t \in [-\pi,\pi]$ is such 
 that there do not exist integers $k$ with $|k| \le C_0'$ satisfying 
$$\|k t/\pi\|_{\R/ \Z} \le n^{-1+8\tau}.$$
\end{condition}
Here $\|.\|_{\R/\Z}$ is the distance to the nearest integer.
\begin{theorem}\label{thm:repulsion} Assume that $\xi$ has mean zero and variance one.
 Then as long as $\al> 1/n$,  $\beta>1/n$ and 
$t$ satisfies Condition \ref{cond:t} with given $\tau,C_0'$ we have
$$\P\big( |P_n(t)|\le \al \wedge |P_n'(t)| \le \beta n \big) = O_{\tau, C_0'}(\al \beta).$$
\end{theorem}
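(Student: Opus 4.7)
I would derive the bound from a two-dimensional Esseen-type small-ball inequality applied to the random vector $(P_n(t),P_n'(t))$. This reduces matters to a pointwise estimate on the joint characteristic function
\[
\phi(s,u):=\E\exp\bigl(isP_n(t)+iuP_n'(t)\bigr),
\]
integrated over the dual box $\{|s|\le C/\alpha,\ |u|\le C/(\beta n)\}$; Esseen contributes a prefactor $\alpha\cdot\beta n$, so the target is to show that this integral is $O_{\tau,C_0'}(1/n)$. Independence of the coefficients yields the factorization
\[
\phi(s,u)=\prod_{k=1}^n \phi_\xi\!\bigl(c_k/\sqrt n\bigr)\,\phi_\xi\!\bigl(d_k/\sqrt n\bigr),
\]
where $c_k=s\cos kt-uk\sin kt$ and $d_k=s\sin kt+uk\cos kt$ obey the Pythagorean identity $c_k^2+d_k^2=s^2+u^2k^2$.

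\textbf{The Gaussian window.} I would split the dual box into a ``Gaussian window'' $\{|s|\le M,\ |u|n\le M\}$ for a large $\xi$-dependent constant $M$, and its complement. Inside the window, every argument $c_k/\sqrt n,\,d_k/\sqrt n$ is at most $M/\sqrt n$, so the elementary quadratic bound $|\phi_\xi(\theta)|^2\le 1-c\theta^2$ near the origin (which follows from $\E\xi=0,\ \Var\xi=1$) together with the Pythagorean identity gives
\[
|\phi(s,u)|\le\exp\!\Bigl(-\tfrac{c}{n}\sum_{k=1}^n(c_k^2+d_k^2)\Bigr)\le\exp(-c'(s^2+u^2n^2)).
\]
A standard Gaussian integral in $(s,un)$ then bounds the contribution of this window to $\int|\phi|$ by $O(1/n)$, as required.

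\textbf{The large window and the role of Condition~\ref{cond:t}.} The delicate regime is the complement, where $c_k/\sqrt n,\,d_k/\sqrt n$ can reach size $O(\sqrt n)$ and the quadratic bound on $|\phi_\xi|$ fails; for lattice-valued $\xi$ (e.g.\ Bernoulli) $|\phi_\xi|$ even attains $1$ at multiples of $\pi$. The main obstacle is to rule out the possibility that for some $(s,u)$ in this large window all arguments $c_k/\sqrt n,\,d_k/\sqrt n$ cluster near the peaks of $|\phi_\xi|$. This is precisely where Condition~\ref{cond:t} enters: forbidding $t$ from being within $n^{-1+8\tau}$ of any rational multiple of $\pi$ with denominator at most $C_0'$ prevents any short-period degeneracy in the sequence $\{kt\bmod\pi\}_{k=1}^n$, and via an equidistribution/counting argument one should show that for every such $(s,u)$ a constant fraction of the indices $k\in\{1,\dots,n\}$ force at least one of $c_k/\sqrt n,\,d_k/\sqrt n$ to lie a definite distance away from all peaks of $|\phi_\xi|$. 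Each such index contributes a factor $\le 1-c_\xi<1$ (with $c_\xi$ depending only on the law of $\xi$), so that $|\phi(s,u)|\le e^{-cn}$ throughout the large window and its contribution is exponentially negligible. Making this counting quantitative, and matching the scale $n^{-1+8\tau}$ in Condition~\ref{cond:t} with the separation threshold needed for the ``good'' indices, will be the technical heart of the argument.
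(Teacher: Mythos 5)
Your strategy coincides with the paper's: write $(P_n(t),P_n'(t))$ as a sum of independent planar increments, apply a two-dimensional Esseen bound, factor the characteristic function, use the quadratic bound $|\phi_\xi(\theta)|\le 1-c\theta^2$ near the origin of the dual box, and invoke Condition \ref{cond:t} to control $|\phi|$ elsewhere. The problem is that the only hard step is asserted rather than proved, and the assertion, as you state it, fails on part of the range. In the intermediate regime $M\le\|(s,un)\|_2=o(\sqrt n)$ every argument $c_k/\sqrt n,\ d_k/\sqrt n$ is still $o(1)$, i.e.\ \emph{all} indices sit near the peak of $|\phi_\xi|$ at the origin; no index is ``a definite distance away from all peaks,'' and indeed $|\phi(s,u)|\ge e^{-C(s^2+u^2n^2)}\gg e^{-cn}$ there. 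That regime must instead be absorbed into the Gaussian window, which should therefore have radius $\asymp\sqrt n$ (the largest scale at which the quadratic bound applies), not a constant $M$. More seriously, in the genuinely dangerous regime $\sqrt n\lesssim\|(s,un)\|_2\le Cn$ — which the dual box does reach, since $\al,\beta$ may be as small as $1/n$ — you give no argument for the equidistribution/counting claim, and it is not routine: for lattice $\xi$ the arguments can wrap around many periods of $|\phi_\xi|$, and one must rule out that most of them land near integer multiples of $\pi$ simultaneously.

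This missing step is exactly the paper's Theorem \ref{thm:fourier}, proved in Appendix \ref{section:fourier}. The mechanism there is not a ``constant fraction at constant distance'' statement: one shows that either at least $n^{3\tau}$ indices $j$ have $\|\psi_j\|_{\R/\Z}>n^{-\tau}$ (already enough, since the dual box has only polynomial area, so a bound $e^{-n^{\tau}}$ suffices), or else there is a long interval of indices on which the nearest integers $m_j$ to $\psi_j=\langle\BD,\Bv_j\rangle$ satisfy a finite-difference identity along an arithmetic progression $j+lp_0$ supplied by the Diophantine hypothesis (Lemma \ref{lemma:diff}, a Konyagin--Schlag type differencing); this forces $m_{j+lp_0}$ to be a low-degree integer polynomial in $l$, hence constant, hence zero by the sign-change part of Claim \ref{claim:t}, whence $\|\psi_j\|_{\R/\Z}=|\psi_j|$ on that interval and the sum $\sum_j\|\psi_j\|_{\R/\Z}^2$ is large by the spread estimate \eqref{eqn:Be:t}. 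Until you supply this argument (or an equivalent one), the proof does not close; the conclusion you want is weaker than $e^{-cn}$ anyway, and aiming for the weaker bound $e^{-n^{\tau}}$ is what makes the argument feasible.
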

In application we just choose $\al,\beta$ to be at least $n^{-c}$ for some small constant $c$. We will also choose $\tau=1/64$.
Note that we can view the event in Theorem \ref{thm:repulsion} as a random walk event in $\R^2$ 
$$\frac{1}{\sqrt{2n}}\sum_{i=1}^{n} (z_i \Bv_i + z_i' \Bv_i') \in [-\al,\al] \times [-\beta ,\beta],$$ 
where $z_i, z_i'$ are iid copies of the random variables $\xi$, with  
$$\Bv_i := (\cos(it), -\frac{i}{n} \sin(it)) \mbox{ and } \Bv_{i}' := (\sin(it), \frac{i}{n} \cos(it)).$$

We now discuss how to prove Theorem \ref{thm:repulsion}. 


Given a real number $w$ and the random variable $\xi$, we define the $\xi$-norm of $w$ by
$$\|w\|_\xi := (\E\|w(\xi_1-\xi_2)\|_{\R/\Z}^2)^{1/2},$$
where $\xi_1,\xi_2$ are two iid copies of $z$. For instance if $\xi$ is $\pm 1$ Bernoulli then $\|w\|_\xi^2 = \| 2w\|_{\R/\Z}^2/2$.

Using this notation, and that $|\sin(\pi x)| \ge \|x\|_{\R/\Z}$, we can bound the characteristic function 
\begin{equation}
\label{eq-char}
\prod \phi_i(x) = \prod \E e( \xi_i \langle \Bv_i, x \rangle) \E e( \xi_i' \langle \Bv_i', x \rangle),
\end{equation}
where $e(y):=e^{iy}$,  as follows (see also \cite[Section 5]{TVcir}):
\begin{align}\label{eqn:cahr:bound}
|\prod \phi_i(x)|  = |\prod|\E e( \xi_i \langle \Bv_i, x \rangle) \E e( \xi_i' \langle \Bv_i', x \rangle) | &\le \prod_i [|\E e( \xi_i \langle \Bv_i,  x \rangle)|^2/2+1/2][|\E e( \xi_i' \langle \Bv_i',  x \rangle)|^2/2+1/2] \nonumber \\
&\le \exp(-\sum_i (\|\langle \Bv_i, x/2\pi \rangle\|_\xi^2 +  \|\langle \Bv_i', x/2\pi \rangle\|_\xi^2)/2).
\end{align}

Hence if we have a good lower bound on the exponent $\sum_i (\|\langle \Bv_i, x/2\pi \rangle\|_\xi^2+\|\langle \Bv_i', x/2\pi \rangle\|_\xi^2)$ then we would have a good control on $|\prod \phi_i(x)|$. Furthermore, by definition
\begin{align}\label{eqn:decoupling}
\sum_i (\|\langle \Bv_i, x/2\pi \rangle\|_\xi^2 + \|\langle \Bv_i', x/2\pi \rangle\|_\xi^2) &= \sum_i \E\| \langle \Bv_i, x/2\pi \rangle (\xi_1-\xi_2)\|_{\R/\Z}^2 +\sum_i \E\| \langle \Bv_i', x/2\pi \rangle (\xi_1-\xi_2)\|_{\R/\Z}^2\nonumber \\
& = \E (\sum_i \| \langle \Bv_i', x/2\pi \rangle (\xi_1-\xi_2)\|_{\R/\Z}^2 + \sum_i \| \langle \Bv_i', x/2\pi \rangle (\xi_1-\xi_2)\|_{\R/\Z}^2) \nonumber \\
&= \E_y (\sum_i \| y \langle \Bv_i, x/2\pi \rangle\|_{\R/\Z}^2+  \| y \langle \Bv_i', x/2\pi \rangle\|_{\R/\Z}^2),
\end{align}
where $y=\xi_1-\xi_2$.
As $\xi$ has mean zero and variance one,
 there exist strictly  positive constants $c_1\le c_2,c_3$ such that $\P(c_1 \le |y/2\pi| \le c_2) \ge c_3$, and so 
\begin{equation}\label{eqn:y}
\E_y( \sum_i\| y \langle \Bv_i, x/2\pi \rangle\|_{\R/\Z}^2 +\|y \langle \Bv_i', x/2\pi \rangle\|_{\R/\Z}^2 ) \ge c_3 \inf_{c_1\le |y| \le c_2}( \sum_i \| y \langle \Bv_i, x \rangle\|_{\R/\Z}^2+ \| y \langle \Bv_i', x \rangle\|_{\R/\Z}^2).
\end{equation}

We then rely on the following estimate, whose proof will be presented in Appendix \ref{section:fourier}. 

\begin{theorem}\label{thm:fourier} Under the same assumption on $\xi$ as in Theorem \ref{thm:repulsion}, and 
with $t$ satisfying Condition \ref{cond:t} with given $\tau,C_0'$, the following holds for sufficiently large $n$. For any $x \in \R^2$ such that $n^{5\tau -1/2} \le \|x\|_2 \le n^{1-8\tau}$ we have
with the notation \eqref{eq-char}, 
 $$|\prod_i \phi_i(x)| \le e^{-n^{\tau}}.$$
\end{theorem}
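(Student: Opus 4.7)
The plan is to reduce to a purely arithmetic lower bound via the bound in \eqref{eqn:cahr:bound} together with the decoupling identity \eqref{eqn:decoupling} and the lower bound \eqref{eqn:y}. Setting $u := yx/(2\pi)$ with $c_1 \le |y| \le c_2$, so that $\|u\|_2 \asymp \|x\|_2$, it suffices to prove
$$ S(u) \ := \ \sum_{k=1}^n \bigl(\|\langle \Bv_k, u\rangle\|_{\R/\Z}^2 + \|\langle \Bv_k', u\rangle\|_{\R/\Z}^2\bigr)\ \gg \ n^{\tau} $$
uniformly for $\|u\|_2$ in an annulus of the form $[c n^{5\tau - 1/2}, C n^{1-8\tau}]$. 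The central tool is the complex identity
$$ \langle \Bv_k, u\rangle + i\langle \Bv_k', u\rangle \ = \ z_k\, e^{ikt}, \qquad z_k := u_1 + i\tfrac{ku_2}{n}, $$
which identifies the two inner products with the real and imaginary parts of a rotating, linearly growing point $z_k e^{ikt}$ in $\C$. Thus $k$ contributes negligibly to $S(u)$ precisely when $z_k e^{ikt}$ is close to a Gaussian integer.

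I would split the analysis into two regimes. In the \emph{small-norm} regime $\|u\|_2 \le 1/4$, both inner products have absolute value below $1/2$, so the residues coincide with the absolute values and
$$ S(u) \ = \ \sum_{k=1}^n |z_k|^2 \ = \ n u_1^2 + \frac{u_2^2}{n^2}\sum_{k=1}^n k^2 \ \asymp \ n\|u\|_2^2 \ \gg \ n^{10\tau}, $$
easily exceeding $n^\tau$. In the \emph{large-norm} regime $\|u\|_2 > 1/4$, call $k$ \emph{bad} if both residues are smaller than $n^{-\tau}$. If fewer than $n/2$ indices are bad, the remaining ones contribute $\ge (n/2) n^{-2\tau} \gg n^\tau$ and we are done. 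Otherwise, for each bad $k$ select a Gaussian integer $Z_k$ within $O(n^{-\tau})$ of $z_k e^{ikt}$; since $z_{k+1} - z_k = iu_2/n$ and $|u_2|/n \le C n^{-8\tau}$ under the upper bound on $\|u\|_2$, for any two bad $k < k'$ with bounded gap $m := k' - k$ one obtains
$$ \bigl|Z_{k'} - Z_k\, e^{imt}\bigr|\ \ll \ m\, n^{-8\tau} + n^{-\tau}\ \ll \ n^{-\tau}. $$
A pigeonhole over the $\ge n/2$ bad indices produces $\gg n$ such near-relations with $m \in \{1,2\}$.

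The final step would be a Diophantine contradiction using Condition \ref{cond:t}. Differencing two such relations with the same $m$ yields nonzero Gaussian integers $(\Delta Z, \Delta Z')$ of magnitude $\ll n^{1-8\tau}$ satisfying $|\Delta Z \, e^{imt} - \Delta Z'| \ll n^{-\tau}$; writing $\Delta Z = p + iq$, real and imaginary parts give $\|p\cos(mt) - q\sin(mt)\|_{\R/\Z}$ and $\|p\sin(mt) + q\cos(mt)\|_{\R/\Z}$ both $\ll n^{-\tau}$. From a collection of such relations, one should be able to extract a single integer $k^*$ with $|k^*| \le C_0'$ satisfying $\|k^* t/\pi\|_{\R/\Z} \le n^{-1+8\tau}$, contradicting Condition \ref{cond:t} and thereby closing the large-norm case.

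The main obstacle is precisely this last distillation step: a naive extraction (e.g.\ taking $k^* = m(p^2+q^2)$ via multiplying the two integer-approximation conditions by $p$ and $q$ respectively) produces $|k^*|$ of polynomial size $n^{O(1-8\tau)}$, far exceeding the bound $C_0'=O(1)$ granted by Condition \ref{cond:t}. A more structural argument will be needed --- either a Bezout-type descent on the Gaussian integer lattice generated by the many $\Delta Z$'s to extract a generator of bounded norm, or an inverse-Littlewood--Offord-style analysis of the set of ``resonant'' Gaussian integers in the spirit of \cite{TVcir}. Careful calibration of $\tau$, $C_0'$, and the various error thresholds so that this final contradiction is sharp will be the principal technical challenge.
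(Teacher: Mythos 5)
Your reduction to the lower bound
$$ \sum_{k=1}^n \bigl(\|\langle \Bv_k, u\rangle\|_{\R/\Z}^2 + \|\langle \Bv_k', u\rangle\|_{\R/\Z}^2\bigr) \gg n^\tau $$
and your small-norm regime agree with the paper, and the complex packaging $\langle \Bv_k,u\rangle + i\langle \Bv_k',u\rangle = (u_1 + i k u_2/n)e^{ikt}$ is a clean reformulation. The gap you flagged at the end, however, is a genuine one, and the route you sketch does not close it. The obstruction is exactly as you say: from a near-identity $|\Delta Z\, e^{imt} - \Delta Z'| \ll n^{-\tau}$ with $\Delta Z, \Delta Z'$ Gaussian integers of magnitude up to $n^{1-8\tau}$, any elementary clearing of denominators produces an integer $k^*$ of polynomial size, while Condition~\ref{cond:t} only excludes Diophantine approximations at \emph{bounded} denominators $|k|\le C_0'$. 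So you cannot reach a contradiction this way, and the ``Bezout descent'' or ``inverse Littlewood--Offord'' ideas you propose would need to be developed into a full argument before the large-norm case is handled.

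The paper sidesteps the Diophantine-extraction problem entirely, via a Konyagin--Schlag differencing argument (Lemma~\ref{lemma:differencing} and Lemma~\ref{lemma:diff}). The two key structural moves you missed are: (i) an unconditional pigeonhole produces a \emph{bounded} integer $p_0 \le A$ and a small $t_0$ with $p_0 t/2\pi - t_0 \in \Z$, $|t_0|\le 4/A$ --- this requires nothing from $t$ --- and this makes $|1 - e^{\sqrt{-1}p_0 t}|\le 4\pi/A$ small, so the $k$-th finite difference with step $p_0$ of $\psi_j = d_1\cos(jt) - d_2 (j/n)\sin(jt)$ decays like $(4\pi/A)^k$; (ii) the $k$-th difference of the nearest integers $m_j$ is then strictly less than $1$ for $k = k_0 \asymp \log n$, hence vanishes, forcing $m_{j+lp_0}$ to be a polynomial in $l$, which by a size comparison must be constant, and then zero by the sign-change statement in Claim~\ref{claim:t}(2). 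Condition~\ref{cond:t} enters only through Claim~\ref{claim:t} (spreading of the $\Bv_j$'s and sign changes of $\cos,\sin$ at bounded step), \emph{not} as the target of a contradiction. The conclusion $m_j = 0$ on a long interval $J$ gives $|\psi_j| = \|\psi_j\|_{\R/\Z}$ there, so the sum is $\ge \|\BD\|_2^2\sum_{j\in J}(\langle \Bv_j,\Be\rangle^2 + \langle \Bv_j',\Be\rangle^2) \gg \|\BD\|_2^2 n^{1-8\tau} \ge n^{2\tau}$, and the lower bound follows. In short: the paper does not derive arithmetic information from the bad indices; it shows the fractional parts coincide with the actual values on a good sub-interval. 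You would need to replace the speculative final step of your plan with an argument of this type (or supply the missing lattice/inverse-theorem step with full detail) for the proof to be complete.
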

We now conclude the small ball probability estimate.
\begin{proof}(of Theorem \ref{thm:repulsion}) As we can cover the given region by disks, without loss of generality we will
consider  $\al = \beta$ and work with balls of radius $\al$. For convenience set
$$t_0:=\al^{-1}.$$ 
We can bound the small ball probability by  (see for instance \cite[Eq. 5.4]{AP} or \cite{E,H})
\begin{align*}
\P\big (\frac{1}{\sqrt{2n}} \sum_i  (\xi_i \Bv_i + \xi_i' \Bv_i') \in B(a,\al) \big ) &\le C  (\frac{n}{t_0^2}) \int_{\R^2}| \prod_i \phi_i(x)| e^{-\frac{n \|x\|_2^2}{2 t_0^2}} dx\\
&\le C' \al^2 n \int_{\R^2}i| \prod_i \phi_i(x)| e^{-\frac{n \|x\|_2^2}{2 t_0^2}}dx.
\end{align*}
We break the integral into three parts, $J_1$ when $\|x\|_2 \le r_0 =O(1)$, $J_2$ when  $r_0 \le \|x\|_2 \le R=n^{1-8\tau}$, and $J_3$ for the remaining integral. 

For $J_1$, recall from \eqref{eqn:cahr:bound} and \eqref{eqn:y} that 
\begin{align*}
|\prod \phi_i(x)| &\le \exp(-(\sum_i \|\langle \Bv_i, x \rangle\|_\xi^2 +  \|\langle \Bv_i', x \rangle\|_\xi^2)/2)\\
&\le \exp(-c_3 \inf_{c_1\le y \le c_2} ( \sum_i \| y \langle \Bv_i, x \rangle\|_{\R/\Z}^2+ \| y \langle \Bv_i', x \rangle\|_{\R/\Z}^2)).
\end{align*}
So if $\|x\|_2 \le r_0$ for sufficiently small $r_0$ then we have $\|y\langle \Bv_i,  x \rangle\|_{\R/\Z} +\|y\langle \Bv_i',  x\rangle\|_{\R/\Z} = \|y\langle \Bv_i, x \rangle\|_2+
\|y\langle \Bv_i', x \rangle\|_2 $, and so because of Condition \ref{cond:t} (see Claim \ref{claim:t},
 which implies that $\sum_i (\langle \Be, \Bv_i\rangle^2 +\langle \Be, \Bv_i'\rangle^2 )  \asymp n$ for $\Be= x/\|x\|_2$)
 we have that for $y>c_1$,
$$\sum_i( \|y\langle \Bv_i, x \rangle\|_{\R/\Z}^2 +\|y\langle \Bv_i', x \rangle\|_{\R/\Z}^2)/2\ge c' n\|x\|_2^2,$$
for some constant $c'=c'(c_1)$.
Thus 
\begin{align*}
J_1 &\le  C' \al^2 n \int_{\|x\|_2 \le r_0} |\prod_i \phi_i(x)| e^{-\frac{n \|x\|_2^2}{2 t_0^2}} dx \le  C' \al^2 n \int_{\|x\|_2 \le r_0}  e^{-\frac{n \|x\|_2^2}{2 t_0^2} - c'n \|x\|_2^2} du\\
& \le  C' \al^2 n \int_{\|x\|_2 \le r_0}  e^{-(\frac{n}{2 t_0^2} + c'n) \|x\|_2^2} dx  \le  C' \al^2 n  \int_{\|x\|_2 \le r_0}  e^{-(\frac{n}{2 t_0^2} + c'n) \|x\|_2^2} dx
 = O(\al^{2}).
\end{align*}

For $J_2$, recall by Theorem \ref{thm:fourier} that for $r_0 \le \|x\|_2 \le R=n^{1-8\tau}$ we have $ |\prod \phi_i(x)| \le \exp(-n^{\tau})$. Thus 
\begin{align*}
J_2 \le  C' \al^2 n  \int_{r_0 \le \|x\|_2 \le R} |\prod_i \phi_i(x)| e^{-\frac{n \|x\|_2^2}{2 t_0^2}} dx &\le C' \al^2 n \int_{r_0 \le \|x\|_2 \le R}  e^{-n^{\tau}} dx\\
& = O( C' \al^2 n^3  e^{-n^{ \tau}})= O(e^{-n^{\tau/2}}).
\end{align*}
For $J_3$, as $t_0 = \al^{-1} = O(n)$ we have
\begin{align*}
J_3 &\le C' \al^2 n  \int_{ \|x\|_2 \ge n^{1-8\tau}} |\prod_i \phi_i(x)| e^{-\frac{n \|x\|_2^2}{2 t^2}} dx = O(e^{-n^{1-16\tau}}).
\end{align*}
 \end{proof}


\section{Exceptional polynomials are rare}\label{section:exceptional} This current section is motivated by the treatment in \cite[Section 4.2]{NS}. Let $R>0$ be a sufficiently large constant. Cover $\BT$ by $\frac{2\pi n}{R}$ open interval $I_i$ of length (approximately) $R/n$ each. Let $3 I_i$ be the interval of length $3R/n$ having the same midpoint with $I_i$. Given some parameters $\al, \beta$, we call an interval $I_i$ {\it stable} for a function $f$ if there is no point in $x\in 3I_i$ such that $|f(x)|\le \al$ and $|f'(x)|\le \beta n$. Let $\delta$ be another small parameter, we call $f$ {\it exceptional} if the number of unstable intervals  is at least $\delta n$. We call $f$ not exceptional otherwise. 

For convenience, for each $P_n(x) =\frac{1}{\sqrt{n}} \sum_{k=1}^n a_k \cos(kx) + b_k \sin(kx)$ we assign a unique (unscaled) vector $\Bv_{P_n}= (a_1,\dots, a_n, b_1,\dots, b_n)$ in $\R^{2n}$, which is a random vector when $P_n$ is random. 
Let $\CE_e = \CE_e(R,\al,\beta; \delta)$ denote the set of vectors $\Bv_{P_n}$ associated to exceptional polynomials $P_n$. Our goal in this section is the following.
\begin{theorem}\label{thm:exceptional} Assume that $\al,\beta,\delta$ satisfy
\begin{equation}\label{eqn:parametersthm}
\al \asymp \delta^{3/2}, \beta \asymp \delta^{3/4},  \delta > n^{-2/5}.
\end{equation}
Then we have
$$\P\Big(\Bv_{P_n} \in \CE_e\Big) \le e^{-c \delta^8 n},$$
where $c$ is absolute.
\end{theorem}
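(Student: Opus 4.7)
The approach follows the concentration-separation paradigm of Nazarov and Sodin, adapted to trigonometric polynomials. Define the typical set
\[
\CA := \{\,\Bv_{P_n} : N_u^{(2\alpha,2\beta)}(P_n) \le \delta n/2\,\},
\]
where $N_u^{(\alpha',\beta')}(P_n)$ denotes the number of intervals $I_i$ unstable for $P_n$ at thresholds $(\alpha',\beta')$. The plan is: (i) show $\Prob(\CA) \ge 1/2$ via a first-moment bound on $N_u^{(2\alpha,2\beta)}$; (ii) establish the deterministic separation $d_2(\CE_e,\CA) \ge c\,\delta^2\sqrt{n}$ in the Euclidean metric on $\R^{2n}$; (iii) apply Theorem~\ref{thm:bounded} (or Theorem~\ref{thm:sobolev}) to conclude $\Prob(\CE_e) \le \exp(-c'\delta^8 n)$. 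The exponent $\delta^8$ arises from the bounded-case exponent $t^4 n$ applied with $t \asymp \delta^2$.

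For step (i), Theorem~\ref{thm:repulsion} supplies the pointwise small-ball estimate $\Prob(|P_n(t)| \le 2\alpha,\;|P_n'(t)| \le 2\beta n) = O(\alpha\beta) = O(\delta^{9/4})$ at every $t$ satisfying Condition~\ref{cond:t}. To lift this into the existential event that some point in $3I_i$ is an $(2\alpha,2\beta)$-witness, we place a deterministic net on each $3I_i$ of spacing $\asymp \alpha/(\beta n) = \delta^{3/4}/n$, and use a Taylor expansion together with a high-probability upper bound on $\|P_n''\|_\infty$ (obtained by Hoeffding concentration for $P_n''(x)$ at a single point plus a union bound over a $1/n^2$-net) to ensure that any such witness lies close to a net point with $|P_n| \le 4\alpha$ and $|P_n'| \le 4\beta n$. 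Summing the pointwise estimate over the net and over $i$ yields $\E N_u^{(2\alpha,2\beta)} \lesssim n\delta^{3/2}$, and Markov gives $\Prob(\CA) \ge 1/2$.

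For step (ii), take $\Bv_{P_n} \in \CE_e$ and $\Bv_{Q_n} \in \CA$. By pigeonhole, at least $\delta n/2$ intervals $I_i$ are simultaneously $P_n$-unstable at $(\alpha,\beta)$ and $Q_n$-stable at $(2\alpha,2\beta)$. Fix in each such $I_i$ a $P_n$-witness $x_i \in 3I_i$ with $|P_n(x_i)| \le \alpha$ and $|P_n'(x_i)| \le \beta n$; the $Q_n$-stability at this same point forces $|Q_n(x_i)| > 2\alpha$ or $|Q_n'(x_i)| > 2\beta n$, so by the triangle inequality $f := Q_n - P_n$ satisfies $|f(x_i)| > \alpha$ or $|f'(x_i)| > \beta n$. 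Since $\alpha \le \beta$, Corollary~\ref{cor:nbh} with $\lambda = \alpha$ and scale $R/n$ covers the set of such $x$ by $O(n\|f\|_{L^2(\BT)}^2/\alpha^2)$ short intervals of length $O(1/n)$, each meeting only $O(1)$ of the $I_j$'s. Matching against the $\delta n/2$ distinct witnesses gives $\|f\|_{L^2(\BT)}^2 \gtrsim \delta\alpha^2$, which via the normalisation $\|P_n\|_{L^2(\BT)}^2 = (\pi/n)\|\Bv_{P_n}\|_2^2$ translates into $d_2(\Bv_{P_n},\Bv_{Q_n}) \gtrsim \alpha\sqrt{\delta n} \asymp \delta^2\sqrt{n}$; step (iii) then follows at once.

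The main technical obstacle is step (i): Theorem~\ref{thm:repulsion} is a clean pointwise estimate, but the event "some $x \in 3I_i$ is an $(2\alpha,2\beta)$-witness" is not directly a pointwise event, and bridging this gap forces an a priori $L^\infty$ control on $P_n''$. A naive Hoeffding plus union-bound argument gives $\|P_n''\|_\infty \lesssim n^2\sqrt{\log n}$ only, which would introduce a $\sqrt{\log n}$ loss in $\E N_u$; absorbing this loss so the Markov step covers the full regime $\delta \ge n^{-2/5}$ requires either a finer partition of $\BT$ into regions with $|P_n''|$ deterministically controlled (the complement having super-polynomially small measure under bounded or log-Sobolev $\xi$) or an alternative enumeration of unstable intervals through connected components of $B_{2\alpha,2\beta}$. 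Steps (ii)--(iii) are by contrast essentially direct applications of the large-sieve Corollary~\ref{cor:nbh} and the McDiarmid-style Theorem~\ref{thm:bounded}.
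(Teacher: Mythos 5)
Your steps (ii) and (iii) are sound, and step (ii) is in fact a legitimately different (and arguably cleaner) separation argument than the paper's: you lower-bound $\|Q_n-P_n\|_{L^2}$ by applying the large sieve directly to the $\delta n/2$ separated witness points where $|f(x_i)|>\al$ or $|f'(x_i)|>\beta n$, whereas the paper establishes separation through a Taylor expansion around each witness (controlling $\max|P_n''|$ on most short balls by large sieve plus pigeonhole, using only the deterministic normalisation $\|P_n\|_2\le 2$) and concludes that any $\tau$-perturbation of an exceptional polynomial still has a sublevel set $\{|h|\le\al',|h'|\le\beta'n\}$ of measure $\gtrsim\delta\gamma$.

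The genuine gap is your step (i), and it is exactly the one you flag: the event ``$3I_i$ contains an $(2\al,2\beta)$-witness'' is a supremum event, and converting Theorem \ref{thm:repulsion} into a bound on its probability via a net forces an a priori $L^\infty$ bound on $P_n''$, costing at least a $\sqrt{\log n}$ factor; your Markov step then needs $\delta^{3/2}\log n\ll\delta$, which fails for $\delta\gtrsim(\log n)^{-2}$, i.e.\ on a nonempty part of the theorem's range $\delta>n^{-2/5}$. Neither of your two suggested repairs is carried out. The paper's device for avoiding this entirely is to make the ``atypical'' set $\CU$ a statement about the \emph{Lebesgue measure} of the sublevel set $B(P_n)=\{x:|h(x)|\le\al',|h'(x)|\le\beta'n\}$ rather than about a \emph{count} of unstable intervals: by Fubini, $\E\mu(B(P_n))=\int_{\BT}\P(|P_n(x)|\le\al',|P_n'(x)|\le\beta'n)\,dx=O(\al'\beta')+O(n^{-1+8\tau})$ is computed from the pointwise repulsion estimate with no net and no control on $P_n''$, and Markov gives $\P(\CU)<1/2$ directly (the measure formulation is also what Claim \ref{claim2} naturally produces after the Taylor step). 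If you want to keep your interval-counting formulation, you would need to either prove the supremum version of the repulsion estimate or reroute through the measure of the sublevel set as the paper does; you should also note (a minor point) that the $O(n^{8\tau})$ intervals meeting the exceptional set $\BT_e$ where Condition \ref{cond:t} fails must be discarded, which is harmless since $n^{8\tau}\ll\delta n$.
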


We now discuss the proof. First assume that $f$ (playing the role of $P_n$) is exceptional, then there are $K=\lfloor \delta n/3 \rfloor$ unstable intervals that are $R/n$-separated (and hence $4/n$-separated, as long as $R$ is chosen larger than $4$).
 Now for each unstable interval in this separated family we choose $x_j \in 3 I_j$ where  $|f(x_j)|\le \al$ and $|f'(x_j)|\le \beta n$ and consider the interval $B(x_j, \gamma/n)$ for some $\gamma <1$ chosen sufficiently small (given $\delta$). Let 
$$M_j:= \max_{x\in B(x_j,\gamma/n)} |f''(x)|.$$  
By Theorem \ref{thm:largesieve} and Theorem \ref{thm:Bern} we have
$$\sum_{j=1}^K M_j^2 \le  \frac{2n+(4/n)^{-1}}{2\pi} \int_{x \in \BT} f''(x)^2 dx \le  n^5 \int_{x \in \BT} f(x)^2 dx.$$
On the other hand, in both the boundedness and the log-Sobolev cases we have $\|f\|_2 \ge 2$ exponentially small, so without loss of generality it suffices to assume $\|f\|_2 \le 2$. We thus infer from the above that the number of $j$ for which $M_j \ge C_2 \delta^{-1/2}n^2$ is at most $2 C_2^{-2} \delta n$. Hence for at least $(1/3 - 2 C_2^{-2})\delta n$ indices $j$ we must have $M_j <C_2 \delta^{-1/2}n^2$.

Consider our function over $B(x_j, \gamma/n)$, then by Taylor expansion of order two around $x_j$, we obtain for any $x$ in this interval
$$ |f(x)| \le \al + \beta \gamma + C_2 \delta^{-1/2} \gamma^2/2 \mbox{ and } |f'(x)| \le (\beta + C_2 \delta^{-1/2} \gamma) n.$$
Now consider a trigonometric polynomial $g$ such that $\|g\|_2 \le \tau$. Our polynomial $g$ has the form $g(x)= \frac{1}{\sqrt{n}} (\sum_{k=1}^{n} a_{k}' \cos (k x) + b_{k}' \sin (k x))$, where $a_{k}', b_{k}'$ are the amount we want to perturb in $f$. Then, similarly to Corollary \ref{cor:nbh}), as the intervals $B(x_j, \gamma/n)$ are $4/n$-separated, by Theorem \ref{thm:largesieve} we have
$$\sum_j \max_{x \in B(x_j, \gamma/n)} g(x)^2 \le  8n \|g\|_2^2 \le 8n \tau^2$$
and 
$$\sum_j \max_{x \in B(x_j, \gamma/n)} g'(x)^2 \le  8n \|g'\|_2^2 \le 8n^3 \tau^2.$$
Hence, again by an averaging argument, the number of intervals where either $\max_{x \in B(x_j, \gamma/n)} |g(x)| \ge C_3 \delta^{-1/2} \tau$ or   $\max_{x \in B(x_j, \gamma/n)} |g'(x)| \ge C_3 \delta^{-1/2} \tau n$ is bounded from above by $(1/3 - 2 C_2^{-2})\delta n /2$ if $C_3$ is sufficiently large. On the remaining at least $(1/3 - 2 C_2^{-2})\delta n /2$ intervals, with $h=f+g$, we have simultaneously that
$$|h(x)| \le  \al + \beta \gamma + C_2 \delta^{-1} \gamma^2/2 + C_3 \delta^{-1/2} \tau \mbox{ and } |h'(x)| \le  (\beta + C_2 \delta^{-1} \gamma + C_3 \delta^{-1/2} \tau) n.$$
For short, let 
$$\al'= \al + \beta \gamma + C_2 \delta^{-1} \gamma^2/2 + C_3 \delta^{-1/2} \tau \mbox{ and } \beta'=\beta + C_2 \delta^{-1/2} \gamma + C_3 \delta^{-1/2} \tau.$$ 
It follows that $\Bv_h$ belongs to the set $\CU=\CU(\al, \beta,\gamma,\delta, \tau, C_1,C_2,C_3)$ in $\R^{2n}$ of the vectors  corresponding to $h$, for which the measure of $x$ with $|h(x)| \le  \al'$ and $|h'(x)| \le  \beta'n $ is at least $(1/3 - 2 C_2^{-2})\delta \gamma$ (because this set of $x$ contains $(1/3 - 2 C_2^{-2})\delta n /2$ intervals of length $2\gamma/n$). 
Putting together we have obtained the following claim. 


\begin{claim}
\label{claim2} Assume that $\Bv_{P_n} \in \CE_e$. Then for any $g$ with $\|g\|_2 \le \tau$ we have $\Bv_{P_n+g} \in \CU$. In other words,
$$\Big \{\Bv\in \R^{2n}, d_2(\CE_e, \Bv) \le \tau \sqrt{n}\Big \} \subset \CU.$$
\end{claim}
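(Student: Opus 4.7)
The plan is to treat this claim as the clean packaging of the chain of estimates assembled in the two paragraphs preceding it: I need to convert the pointwise bounds already derived into a positive lower bound on the Lebesgue measure of $\{x:|h(x)|\le\al',|h'(x)|\le\beta' n\}$, and then match the resulting conclusion to the definition of $\CU$.

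First, from $\Bv_{P_n}\in\CE_e$ I greedily extract a subfamily of $K\ge\lfloor\delta n/3\rfloor$ unstable intervals whose selected witness points $x_j$ are $R/n$-separated (in particular $4/n$-separated for $R\ge 6$), each satisfying $|f(x_j)|\le\al$ and $|f'(x_j)|\le\beta n$ with $f:=P_n$. I may assume $\|f\|_2\le 2$, since the complementary event has exponentially small probability under either hypothesis on $\xi$. Applying the large sieve (Theorem \ref{thm:largesieve}) at the $x_j$'s to $f''$, together with Bernstein (Theorem \ref{thm:Bern}) used twice to bound $\|f''\|_2\le n^2\|f\|_2$, yields $\sum_j M_j^2\ll n^5$ where $M_j:=\max_{B(x_j,\gamma/n)}|f''|$, and Markov retains at least $(1/3-2C_2^{-2})\delta n$ indices on which $M_j\le C_2\delta^{-1/2}n^2$. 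A second-order Taylor expansion around $x_j$ then gives, throughout the ball $B(x_j,\gamma/n)$,
\begin{align*}
|f(x)|&\le \al+\beta\gamma+\tfrac12 C_2\delta^{-1/2}\gamma^2,\\
|f'(x)|&\le (\beta+C_2\delta^{-1/2}\gamma)n.
\end{align*}

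For the perturbation $g$ with $\|g\|_2\le\tau$, I reuse the large sieve at the points attaining $\max|g|$ and $\max|g'|$ inside each $B(x_j,\gamma/n)$, which remain $\ge 4/n$-separated; combined with $\|g'\|_2\le n\|g\|_2\le n\tau$ this gives $\sum_j(\max|g|)^2\ll n\tau^2$ and $\sum_j(\max|g'|)^2\ll n^3\tau^2$. A further Markov cut removes at most $(1/3-2C_2^{-2})\delta n/2$ additional indices, leaving $\ge(1/3-2C_2^{-2})\delta n/2$ balls on which $|g|\le C_3\delta^{-1/2}\tau$ and $|g'|\le C_3\delta^{-1/2}\tau n$ uniformly. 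The triangle inequality yields $|h|\le\al'$ and $|h'|\le\beta' n$ on each surviving ball, and the Lebesgue measure of the sub-level set is therefore at least $(1/3-2C_2^{-2})\delta n/2\cdot(2\gamma/n)=(1/3-2C_2^{-2})\delta\gamma$, which is exactly the condition defining $\CU$. Linearity $\Bv_{P_n+g}=\Bv_{P_n}+\Bv_g$ together with the Parseval identity $\|\Bv_g\|_2^2=(n/\pi)\|g\|_2^2$ then convert the pointwise statement of the claim into the asserted set inclusion (with the constant $\pi$ absorbed into $\tau$).

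The only delicate point is bookkeeping: $C_2$ and $C_3$ must be chosen as absolute constants so that the two successive Markov cuts together still retain a definite positive fraction of the originally extracted $K$ intervals, and the parameter regime \eqref{eqn:parametersthm} is precisely what keeps the four contributions $\al,\beta\gamma,C_2\delta^{-1/2}\gamma^2,C_3\delta^{-1/2}\tau$ to $\al'$ of comparable (small) size, which will matter when the repulsion estimate of Section \ref{section:repulsion} is applied at the next step. Otherwise the argument is entirely deterministic and uses only the two sieve and Bernstein ingredients from Section 2.
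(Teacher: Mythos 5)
Your proposal is correct and follows essentially the same route as the paper: the paper's ``proof'' of Claim \ref{claim2} is exactly the chain of estimates in the two preceding paragraphs (separated unstable intervals, large sieve plus Bernstein for $f''$, a Markov cut, second-order Taylor expansion, a second sieve/Markov cut for $g$ and $g'$, and the triangle inequality), which you reproduce step by step. Your explicit remark about the Parseval normalization $\|\Bv_g\|_2^2=(n/\pi)\|g\|_2^2$ is a detail the paper glosses over, and your use of $\delta^{-1/2}$ throughout is in fact more internally consistent than the paper's own display of $\al'$.
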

We next show that $\P(\Bv_{P_n} \in \CU)$ is smaller than $1/2$. Indeed, let $\BT_{e}$ denote the collection of $x \in \BT$ which can be $n^{-1+8\tau}$ approximated by rational numbers of bounded height (see Condition \ref{cond:t}, here we choose $\tau=1/64$). Thus $\BT_e$ is a union of a bounded number of intervals of length $n^{-1+8\tau}$. For each $P_n$, let $B(P_n)$ (and $B_e(P_n)$) be the measurable set of  $x\in \BT$ (or $x\in \BT_e^c$ respectively) such that $\{|P_n(x)| \le \al'\} \wedge \{|P_n'(x)| \le  \beta' n\}$. Then the Lebesgue measure of $B(P_n)$, $\mu(B(P_n))$, is bounded by $\mu(B_e(P_n)) + O(n^{-1+8\tau})$, which in turn can be bounded by 


$$\E \mu(B_e(P_n)) = \int_{x \in  \BT_{e}^c} \P(\{|P_n(x)| \le \al'\} \wedge \{|P'_n(x)| \le n \beta'\}) dx = O(\al' \beta'),$$
where we used Theorem \ref{thm:repulsion} for each $x$.  It thus follows that $\E \mu(B(P_n)) = O(\al' \beta') + O(n^{-1+8\tau})$. So by Markov inequality,
\begin{equation}
\label{eq-1011}
\P(\Bv_{P_n} \in \CU) \le \P\big ( \mu(B(P_n)) \ge (1/3 - 2 C_2^{-2})\delta \gamma \big ) = O(\al' \beta'/\delta \gamma) <1/2
\end{equation}
if $\al, \beta$ are as in \eqref{eqn:parametersthm} and then $\gamma, \tau$ are chosen appropriately, for instance as
\begin{equation}\label{eqn:parameters}
\gamma \asymp \delta^{5/4}, \tau \asymp \delta^2. 
\end{equation}


\begin{proof}(of Theorem \ref{thm:exceptional}) By Theorems \ref{thm:bounded} and \ref{thm:sobolev} and using Claim \ref{claim2} and \eqref{eq-1011},  we have
$$\P(\Bv_n\in \CE_e) \le e^{-c\tau^4 n}.$$
\end{proof}

\section{Roots over unstable intervals}\label{section:lowertail}

In this section we show the following lemma.

\begin{lemma}\label{lemma:manyroots'} Let $\eps$ be given as in Theorem \ref{thm:main}. Assume that the parameters $\al, \beta, \tau$ are chosen as in \eqref{eqn:parametersthm} and \eqref{eqn:parameters}, and $\delta$ is chosen such that
 \begin{equation}\label{eqn:delta:upper}
\delta \le  \frac{c_0 \eps } {\log (1/\eps)}
\end{equation}
for some small positive constant $c_0$. Assume that a trigonometric polynomial $P_n$ has at least $\eps n/2$ roots over $\delta n$ disjoint  intervals of length $R/n$. Then there is a set $A \subset \BT$ of measure at least $\frac{\eps}{1024 e}$ on which 
$$\max_{x\in A}|f(x)| \le \al \mbox{ and } \max_{x\in A} |f'(x)| \le \beta n.$$
\end{lemma}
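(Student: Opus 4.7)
The plan is to pigeonhole to ``heavy'' intervals carrying many roots, apply Lemma~\ref{lemma:expansion} to produce flat neighborhoods around them, and resolve overlaps among these neighborhoods by a merging argument that preserves the relevant length-to-root-count invariant.

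For the setup, let $k_j$ denote the number of roots of $P_n$ in the $j$-th interval $I_j$, so $\sum_j k_j \ge \eps n/2$. We may assume $\|P_n\|_2 \le 2$ (as in Section~\ref{section:exceptional}, larger values are exponentially unlikely). Iterating Bernstein's inequality (Theorem~\ref{thm:Bern}) and applying Theorem~\ref{thm:largesieve} at a single point yields $\max_x|P_n^{(m)}(x)| \le 2 n^{m+1/2}$ for each $m$. Choose $m_0 := \lceil \log_2(4\sqrt{n}/\al) \rceil$; by Lemma~\ref{lemma:expansion}, an interval of length $\tfrac{1}{8e}m/n$ containing at least $m \ge m_0$ roots satisfies
\begin{equation*}
\max|P_n| \le 2\sqrt{n}\cdot 2^{-m} \le \al, \qquad \max|P_n'| \le 4 n^{3/2}\cdot 2^{-m} \le \beta n
\end{equation*}
on it (the binding constraint is $|P_n|\le\al$ since $\al \asymp \delta^{3/2}$ is smaller than $\beta \asymp \delta^{3/4}$). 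Using $\delta \ge n^{-2/5}$ one has $m_0 = O(\log n)$.

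Call $I_j$ \emph{heavy} if $k_j \ge m_0$. The non-heavy intervals contribute at most $m_0 \delta n$ roots in total; combined with \eqref{eqn:delta:upper} and the bound $\log n \le c^{-1} \log(1/\eps)$ (from $\eps \ge n^{-c}$), we get $m_0 \delta \le \eps/4$ once $c_0$ is chosen small enough, so heavy intervals carry at least $\eps n/4$ roots. For each heavy $I_j$, Lemma~\ref{lemma:expansion} gives an expansion $I_j^+ \supset I_j$ of length $\tfrac{1}{8e}k_j/n$ on which $|P_n|\le \al$ and $|P_n'|\le \beta n$. To address overlap of the $I_j^+$, I would iteratively merge: maintain pairs $(J, M)$ with $J$ an interval of length $\le \tfrac{1}{8e} M/n$ containing $M$ roots (initialized by $(I_j, k_j)$); whenever the canonical expansions $J_1^+, J_2^+$ of two pairs meet, replace them by $(J_{12}, M_1+M_2)$ where $J_{12}$ is the smallest interval containing $J_1 \cup J_2$. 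The estimate
\begin{equation*}
|J_{12}| \le |J_1^+ \cup J_2^+| \le |J_1^+| + |J_2^+| = \tfrac{1}{8e}(M_1+M_2)/n
\end{equation*}
preserves the invariant, so Lemma~\ref{lemma:expansion} continues to produce the bounds $|P_n|\le\al$, $|P_n'|\le \beta n$ on the new expansion. The procedure strictly reduces the collection size and terminates in a disjoint family $\{J_l^+\}$ with $\sum_l M_l = \sum_{\text{heavy}} k_j \ge \eps n/4$ conserved, so
\begin{equation*}
\mu\Bigl(\bigcup_l J_l^+\Bigr) = \sum_l |J_l^+| = \frac{\sum_l M_l}{8e\,n} \ge \frac{\eps}{32 e} \ge \frac{\eps}{1024 e}.
\end{equation*}
Setting $A := \bigcup_l J_l^+$ then finishes the proof.

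The hard part will be the overlap of the expansions $I_j^+$: a naive union $\bigcup I_j^+$ could have measure far smaller than $\sum |I_j^+|$ when many heavy intervals cluster together. The merging argument circumvents this by exploiting the flexibility of Lemma~\ref{lemma:expansion}: overlapping clusters of flat neighborhoods are absorbed into a single longer flat region whose length grows proportionally to the combined root count, so no measure is lost in the final count toward $\mu(A)$.
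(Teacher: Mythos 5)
There is a genuine gap at your heavy/light split. Your threshold is $m_0=\lceil\log_2(4\sqrt n/\al)\rceil\asymp\log n$, because you bound $\max_x|P_n^{(m)}(x)|$ by the pointwise estimate $\asymp n^{1/2}\cdot n^{m}$ and must beat the factor $n^{1/2}$ with $2^{-m}$. The light intervals then carry up to $m_0\delta n\asymp\delta n\log n$ roots, and you need this to be $\le\eps n/4$, i.e.\ $\delta\lesssim\eps/\log n$. But hypothesis \eqref{eqn:delta:upper} only gives $\delta\le c_0\eps/\log(1/\eps)$, and your claimed inequality $\log n\le c^{-1}\log(1/\eps)$ is the reverse of what $\eps\ge n^{-c}$ yields (namely $\log(1/\eps)\le c\log n$). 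For $\eps$ of constant order and $\delta\asymp\eps/\log(1/\eps)$ --- exactly the regime used in the application, since one wants $\delta$, hence $\tau\asymp\delta^2$, as large as possible --- the light intervals may carry $\asymp\eps n\log n/\log(1/\eps)\gg\eps n$ roots, so your accounting can discard essentially all of the roots.

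The paper avoids this by never invoking the sup-norm bound on $P_n^{(m)}$. It discards only intervals with fewer than $\eps\delta^{-1}/4$ roots (losing at most $\eps n/4$ roots, with no $\log n$), merges overlapping expansions much as you do, groups the merged intervals dyadically by root count $d_l=2^l\eps\delta^{-1}/4$, and applies the large sieve (Theorem \ref{thm:largesieve}) to $P_n^{(d_l)}$ across the whole $R/n$-separated family in each group: for at least half the intervals of the group $G_l$ this gives $\max|P_n^{(d_l)}|\le 4(n/k_l)^{1/2}n^{d_l}$, where $k_l=|G_l|$. The gain of $k_l^{-1/2}$ over your $O(1)$ is what allows the root threshold per interval to be $\asymp\eps\delta^{-1}\gtrsim\log(1/\eps)/c_0$ rather than $\asymp\log n$; the groups where $k_l$ is too small for this to beat $\la\asymp\delta^{3/2}$ are the ``bad indices,'' and their total root contribution is $\lesssim n\delta^{-3}2^{-\eps\delta^{-1}}\le\eps n/32$ precisely because of \eqref{eqn:delta:upper}. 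Your merging step and the final measure count are sound and match the paper's; what is missing is the replacement of the pointwise bound by this group-averaged large-sieve bound together with the bad-index accounting.
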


Before proving this result, we deduce that non-exceptional polynomials cannot have too many roots over the unstable intervals.

\begin{cor}\label{cor:manyroots}   Let the parameters $\eps, \al, \beta, \tau$ and $\delta$ be as in Lemma \ref{lemma:manyroots'}, and assume that $R$ is such that
$\delta R<\eps/1024 e$. Then a non-exceptional $P_n$ cannot have more than $\eps n/2$ roots over any $\delta n$ intervals $I_i$ from Section \ref{section:exceptional}. In particularly, $P_n$ cannot have more than $\eps n/2$ roots over the unstable intervals. 
\end{cor}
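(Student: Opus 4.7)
The plan is to argue by contradiction, directly invoking Lemma~\ref{lemma:manyroots'}. Suppose $P_n$ is non-exceptional yet admits at least $\eps n/2$ roots over some collection of $\delta n$ intervals $I_i$ from the covering of $\BT$ in Section~\ref{section:exceptional}. Since these $I_i$ are $R/n$-long and disjoint, the hypotheses of Lemma~\ref{lemma:manyroots'} are in force (with $\al,\beta,\tau$ as in \eqref{eqn:parametersthm} and \eqref{eqn:parameters}), so we obtain a measurable set $A \subset \BT$ with $|A| \ge \eps/(1024 e)$ on which $|P_n(x)| \le \al$ and $|P_n'(x)| \le \beta n$ simultaneously.

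Next I would localize $A$ inside the unstable intervals. Every $x \in A$ belongs to some $I_j$ from the covering; since $I_j \subseteq 3I_j$ and the two small-value conditions already hold at $x$, the definition of instability from Section~\ref{section:exceptional} is witnessed at $I_j$. Hence
\[
A \;\subseteq\; \bigcup_{I_j \text{ unstable}} I_j.
\]

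Because $P_n$ is non-exceptional, there are fewer than $\delta n$ unstable intervals, each of length $R/n$, so the right-hand side has Lebesgue measure at most $\delta R$. Combining with the hypothesis $\delta R < \eps/(1024 e)$ gives
\[
\frac{\eps}{1024 e} \;\le\; |A| \;\le\; \delta R \;<\; \frac{\eps}{1024 e},
\]
a contradiction. This proves the first assertion. The second assertion is an immediate special case: by non-exceptionality, the unstable intervals themselves form a family of at most $\delta n$ intervals $I_i$, so if they carried more than $\eps n/2$ roots the same argument would apply.

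I do not anticipate any genuine obstacle here — the whole corollary is a one-line consequence of Lemma~\ref{lemma:manyroots'} once one observes that any point witnessing smallness of both $|P_n|$ and $|P_n'|$ automatically destabilizes the cover interval containing it. The only bookkeeping is to check that the constants from \eqref{eqn:parametersthm} and \eqref{eqn:parameters} together with the standing assumption $\delta R < \eps/(1024 e)$ do line up, which is immediate from the statement.
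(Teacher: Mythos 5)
Your proof is correct and takes essentially the same approach as the paper: both derive the set $A$ from Lemma~\ref{lemma:manyroots'} and then exploit the fact that $|A|\ge \eps/(1024e) > \delta R$ while the unstable intervals have total measure at most $\delta R$. The only cosmetic difference is that you phrase this as a containment $A\subseteq\bigcup\{\text{unstable }I_j\}$ followed by a measure comparison, whereas the paper phrases it as $A$ being forced to meet a stable interval and thereby violate stability; these are contrapositive formulations of the same one-line argument.
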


\begin{proof} If $P_n$ has more than $\eps n/2$ roots over some $\delta n$ intervals $I_i$, then  Lemma \ref{lemma:manyroots'} implies the existence of 
a set $A=A(P_n)$ that intersects with the set of stable intervals (because $\eps/(1024 e) > \delta R$), so that
 $\max_{x\in A}|P_n(x)| \le \al$ and $\max_{x\in A} |P_n'(x)| \le \beta n$. However, this is impossible because for any 
 $x$ in the union of the stable intervals we have either $|P_n(x)|> \al$ or $|P_n'(x)| > \beta n$.
\end{proof}


We now prove Lemma \ref{lemma:manyroots'}. The main idea is that if $P_n$ has too many roots over a small union of intervals, then we can use Lemma \ref{lemma:expansion} to show that $|P_n|$ and $|P_n'|$ are small over a set of non-negligible measure.

\begin{proof}(of Lemma \ref{lemma:manyroots'}) Among the $\delta n$ intervals we first throw away those of less than $\eps \delta^{-1}/4$ roots, hence there are at least $\eps n/4$ roots left. For convenience we denote the remaining intervals by $J_1,\dots, J_M$, where $M \le \delta n$,  and let $m_1,\dots, m_M$ denote the number of roots over each of these intervals respectively. 


In the next step (which is geared towards the use of \eqref{eqn:expand:1} and \eqref{eqn:expand:2} of Lemma \ref{lemma:expansion}),
we  expand the intervals $J_j$ to larger intervals $\bar J_j$ 
(considered as union of consecutive closed intervals appearing at the beginning of Section \ref{section:exceptional}) of length $\lceil c m_j/R \rceil \times (R/n)$ for some small constant $c$, such as $c=1/(16e)$. Furthermore, if the expanded intervals $\bar J_{i_1}',\dots, \bar J_{i_k}'$ of $\bar J_{i_1},\dots, \bar J_{i_k}$ form an intersecting chain, then we create a longer interval
 $\bar J'$ of length $\lceil c(m_{i_1}+\dots+m_{i_k})/R \rceil \times (R/n)$, which contains them and therefore contains at least $m_{i_1}+\dots+m_{i_k}$ roots. After the merging process,
 we obtain a collection 
 $\bar J_1',\dots, \bar J_{M'}'$ with the number of roots $m_1',\dots, m_{M'}'$ respectively, so that $\sum m_i'\geq \eps n/2$.
  Note that now $\bar J_i'$ has length $\lceil cm_i'/R \rceil \times (R/n) \approx cm_i'/n$ (because $\eps \delta^{-1}$ is sufficiently large compared to $R$) and the intervals are $R/n$-separated.

Next, consider the sequence $d_l:=2^l \eps \delta^{-1}/4, l\ge 0$. We classify the sequence $\{m_i'\}$ into groups $G_l$ where 
$$d_l \le m_i' < d_{l+1}.$$ 
Assume that each group $G_l$ has $k_l=|G_l|$ distinct extended intervals. As each of these intervals has between $d_l$ and $d_{l+1}$ roots, we have 
$$\sum_l k_l d_l \ge \sum_i m_i'/2 \ge \eps n/8.$$
For given $\al, \beta$, we call an index $l$ {\it bad} if 
$$(1/2)^{d_{l}}  (n/2k_{l})^{1/2}  \ge  \la= \min\{\al/4,\beta/4\}.$$ 
That is when
$$k_l \le \frac{n}{2\la^2 4^{d_{l}}}.$$
The total number of roots over the intervals corresponding to bad indices can be bounded by 
$$\sum_i m_i' \le \sum_l k_l d_{l+1}   \le \frac{n}{2\la^2} \sum_{l=0}^\infty\frac{2 d_{l}}{ 4^{d_{l}}} \le \frac{n}{\la^2 2^{\eps\delta^{-1}}} \asymp \frac{n}{\delta^3 2^{\eps\delta^{-1}}} \le \eps n/32$$
where we used the fact that $\delta \le  \frac{c_0 \eps } {\log (1/\eps)}$ for some small constant $c_0$.

Now consider a group $G_{l}$ of each good index $l$. Notice that these intervals have length approximately between $cd_l/n$ and $2cd_l/n$. Let $I$ be an interval among the $k_l$ intervals in $G_l$. By Lemma \ref{lemma:expansion} and by definition we have 
\begin{equation}\label{eqn:PP_d}
\max_{x \in I} |P_n(x)| \le (\frac{1}{2})^{d_l} (\frac{1}{n})^{d_l} \max_{x\in I} |P_n^{(d_l)}(x)| \le \frac{\la}{ (n/2k_{l})^{1/2}  } (\frac{1}{n})^{d_l} \max_{x\in I} |P_n^{(d_l)}(x)|
\end{equation}
as well as 
\begin{equation}\label{eqn:PP_d'}
\max_{x \in I} |P_n'(x)| \le n \times (\frac{1}{2})^{d_l-1} (\frac{1}{n})^{d_l}  \max_{x\in I} |P_n^{(d_l)}(x)|  \le n \times \frac{2\la}{ (n/2k_{l})^{1/2}  } (\frac{1}{n})^{d_l} \max_{x\in I} |P_n^{(d_l)}(x)|.
\end{equation}
On the other hand, as these $k_l$ intervals are $R/n$-separated (and hence $4/n$-separated), by Theorem \ref{thm:largesieve} and Theorem \ref{thm:Bern}  we have 
$$\sum_{\bar J_i'\in G_l}\max_{x\in \bar J_{i}'}(P_n^{(d_{l})}(x))^2 \le n \int_{x\in \T} (P_n^{(d_{l})}(x))^2 dx \le n \times n^{2d_l} \int_{x\in \T} (P_n(x))^2 dx \le 2 n^{2d_l+1}.$$
Hence we see that for at least half of the intervals $J_i'$ in $G_l$ 
$$\max_{x\in J_{i}'}|P^{(d_{l})}(x)| \le 4 (n/k_{l})^{1/2} n^{d_l} .$$ 
It follows from \eqref{eqn:PP_d} and \eqref{eqn:PP_d'} that over these intervals
$$\max_{x \in J_{i}'} |P_n(x)| \le  \frac{\la}{ (n/2k_{l})^{1/2}  } (\frac{1}{n})^{d_l}  4 (n/k_{l})^{1/2} n^{d_{l}}  \le  4\la$$
and similarly,
$$\max_{x \in J_i'} |P_n'(x)| \le n \times \frac{\la}{ (n/2k_{l})^{1/2}  } (\frac{1}{n})^{d_l} 4 (n/k_{l})^{1/2} n^{d_{l}}   \le 4 \la n.$$

Letting $A_l$ denote the union of all such intervals $J_{i}'$ of a given good index $l$, and letting $A$ denote the union of the $A_l$'s over all  good indices $l$, we obtain (with $\mu$ denoting Lebesgue measure)
\begin{align*}
\mu(A) \ge \sum_{l, \textsf{good}} (cd_l/n) k_l/2& \ge \sum_{l, \textsf{good}} (c/4) d_{l+1}k_l/n \ge \sum_{l, \textsf{good}} (c/4) m_l/n \\
& \ge (c/4)(\eps n/8 -\eps n/32)/n \ge \frac{\eps}{1024 e}. 
\end{align*}
Finally, notice that over $A$ we have $\max_{x \in A} |P_n(x)| \le 4\la \le \al$ and $\max_{x \in A} |P_n'(x)|\le 4\la n \le \beta n$. 
\end{proof}


We conclude the section by a quick consequence of our lemma. For each $P_n$ that is not exceptional we let $S(P_n)$ be the collection of intervals over which $P_n$ is stable. Let $N_s(P_n)$ denote the number of roots of $P_n$ over the set $S(P_n)$ of stable intervals.

\begin{cor}\label{cor:control} With the same parameters as in  Corollary \ref{cor:manyroots}, we have
$$\P\Big(N_s(P_n) 1_{P_n \in \CE_e^c} \le \E N_n- \eps n\Big)=o(1)$$
and
$$\E \Big(N_s(P_n) 1_{P_n \in \CE_e^c}\Big) \ge \E N_n- 2 \eps n/3.$$
\end{cor}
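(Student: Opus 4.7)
The plan is to reduce both statements to (i) the deterministic control provided by Corollary \ref{cor:manyroots}, (ii) the exponential tail of Theorem \ref{thm:exceptional}, and (iii) the first/second moment information for $N_n$ coming from Theorem \ref{thm:NgV} together with the Markov bound \eqref{eqn:Markov}. The essential input is the pointwise inequality
\begin{equation*}
N_s(P_n)\,\1_{\CE_e^c}(P_n) \;\ge\; \bigl(N_n(P_n) - \eps n/2\bigr)\,\1_{\CE_e^c}(P_n),
\end{equation*}
which follows directly from Corollary \ref{cor:manyroots}: on the event $\{P_n\notin\CE_e\}$ the number of roots lying in unstable intervals is at most $\eps n/2$, and the remaining roots all sit inside stable intervals.

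For the first statement, I would observe that if $P_n\in\CE_e$ then $N_s(P_n)\1_{\CE_e^c}=0$, while on $\CE_e^c$ the pointwise inequality above gives the containment
\begin{equation*}
\bigl\{N_s(P_n)\1_{\CE_e^c}\le \E N_n-\eps n\bigr\} \;\subseteq\; \CE_e \;\cup\; \bigl\{N_n \le \E N_n-\eps n/2\bigr\}.
\end{equation*}
Theorem \ref{thm:exceptional} bounds $\P(\CE_e)$ by $e^{-c\delta^8 n}$, and with the admissible choice $\delta \asymp \eps/\log(1/\eps)$ and $\eps\ge n^{-c}$ (taking $c$ small enough, say $c<1/8$) this is super-polynomially small. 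The second piece on the right is $O(\eps^{-2}n^{-c})=o(1)$ by \eqref{eqn:Markov}, and a union bound closes the first claim.

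For the expectation statement, the same pointwise inequality yields
\begin{equation*}
\E\bigl[N_s(P_n)\1_{\CE_e^c}\bigr]\;\ge\;\E\bigl[N_n\1_{\CE_e^c}\bigr]-\tfrac{\eps n}{2}\;=\;\E N_n-\E\bigl[N_n\1_{\CE_e}\bigr]-\tfrac{\eps n}{2}.
\end{equation*}
It remains to show that $\E[N_n \1_{\CE_e}]=o(\eps n)$. By Cauchy--Schwarz,
\begin{equation*}
\E\bigl[N_n\1_{\CE_e}\bigr]\;\le\;\bigl(\E N_n^2\bigr)^{1/2}\,\P(\CE_e)^{1/2},
\end{equation*}
and Theorem \ref{thm:NgV} gives $\E N_n = O(n)$ and $\Var(N_n)=O(n^{2-c})$, so $\E N_n^2 = O(n^2)$. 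Combining with Theorem \ref{thm:exceptional} gives $\E[N_n\1_{\CE_e}]\le O(n)\, e^{-c\delta^8 n/2}$, which for our range of $\eps$ is much smaller than $\eps n/6$. Consequently
\begin{equation*}
\E\bigl[N_s(P_n)\1_{\CE_e^c}\bigr]\;\ge\;\E N_n-\tfrac{\eps n}{2}-o(\eps n)\;\ge\;\E N_n-\tfrac{2\eps n}{3}
\end{equation*}
for $n$ large enough.

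The only delicate point is to verify that the chosen $\delta\asymp \eps/\log(1/\eps)$ does make $\delta^8 n$ large enough to absorb both the factor $n$ in the Cauchy--Schwarz bound and the $\eps^{-2}n^{-c}$ polynomial slack in \eqref{eqn:Markov}, i.e.\ ensuring $n^{1-8c}/(\log n)^{8}$ grows; this only constrains the constant $c$ in the hypothesis $\eps\ge n^{-c}$. No other obstacle is expected, since Corollary \ref{cor:manyroots} already does the deterministic heavy lifting and Theorem \ref{thm:exceptional} handles the exceptional event.
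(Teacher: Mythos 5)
Your proof is correct and follows essentially the same route as the paper: both hinge on the pointwise bound $N_{us}(P_n)\le \eps n/2$ on $\CE_e^c$ supplied by Corollary \ref{cor:manyroots}, combined with \eqref{eqn:Markov} for the bulk event and Theorem \ref{thm:exceptional} for the exceptional event. The only cosmetic difference is that for $\E[N_n\1_{P_n\in\CE_e}]$ the paper invokes the deterministic bound $N_n=O(n)$ directly instead of your Cauchy--Schwarz step; both are valid.
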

\begin{proof} For the first bound, by Corollary \ref{cor:manyroots}, if $N_s(P_n) 1_{P_n \in \CE_e^c} \le \E N_n- \eps n$ then $N_n 1_{P_n \in \CE_e^c} \le \E N_n -\eps n/2$. Thus
\begin{align*}
\P\big(N_s(P_n) 1_{P_n \in \CE_e^c} \le \E N_n- \eps n\big) &\le \P\big(N_n(P_n) 1_{P_n \in \CE_e^c} \le \E N_n- \eps n/2\big) \\
&\le \P\big(\CE_e^c \wedge N_n(P_n) \le \E N_n- \eps n/2 \big) + \P(\CE_e)=o(1),
\end{align*}
where we used \eqref{eqn:Markov} and Theorem \ref{thm:exceptional}. For the second bound regarding $\E(N_s(P_n) 1_{P_n \in \CE_e^c})$ , let $N_{us}(P_n)$ denote the number of roots of $P_n$ over the set of unstable intervals. By Corollary \ref{cor:manyroots}, for non-exceptional $P_n$ 
 we have that
 $N_{us}(P_n)\le \eps n/2$, and hence trivially $\E (N_{us}(P_n) 1_{P_n \in \CE_e^c}) \le \eps n /2$. Because each $P_n$ has $O(n)$ roots, we then obtain
\begin{align*}
\E (N_s(P_n) 1_{P_n \in \CE_e^c}) &\ge \E N_n - \E (N_{us}(P_n) 1_{P_n \in \CE_e^c}) - \E (N_n 1_{P_n \in \CE_e})\\
&\ge   \E N_n - \eps n/2 - O( n\times  e^{-c\tau^4 n}) \ge \E N_n - 2\eps n/3.
\end{align*}
\end{proof}

\section{proof of the main results}


We first give a deterministic result (see also \cite[Claim 4.2]{NS}) to control the number of roots under perturbation.

\begin{lemma}\label{lemma:separation} Fix strictly 
positive numbers $\mu$ and $\nu$. Let $I=(a,b)$ be an  interval of length greater than $2\mu/\nu$, and let $f$ be a  $C^1$-function on $I$ such that at each point $x\in I$ we have either $|f(x)|> \mu$ or $|f'(x)| > \nu$. Then for each root $x_i \in I$ with $x_i-a>\mu/\nu$ and $b-x_i>\mu/\nu$ there exists an interval $I(x_i) = (a',b')$ where $f(a')f(b')<0$ and $|f(a')|=|f(b')|=\mu$,
such that $x_i \in I(x_i) \subset (x_i-\mu/\nu, x_i + \mu/\nu)$ and the intervals $I(x_i)$ over the roots are disjoint.
\end{lemma}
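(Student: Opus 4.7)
The plan is to build $I(x_i)$ by running outward from $x_i$ until $|f|$ first reaches $\mu$ on each side. Since $f(x_i)=0<\mu$, the hypothesis forces $|f'(x_i)|>\nu$; by reflecting $f$ if necessary I will assume $f'(x_i)>0$. I then set $b' := \inf\{y\in(x_i,b) : |f(y)| = \mu\}$ and $a' := \sup\{y\in(a,x_i) : |f(y)| = \mu\}$, and the distance assumptions $b-x_i,\; x_i-a > \mu/\nu$ will guarantee that these candidate endpoints actually lie inside $I$.

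The first technical point is to show $b'$ exists with $b' - x_i \le \mu/\nu$ (and similarly for $a'$). Suppose toward a contradiction that $|f|<\mu$ throughout $(x_i, x_i+\mu/\nu]$. Then $|f'|>\nu$ on that interval by the hypothesis, and since $f'$ is continuous and nowhere zero there, it retains the sign it has at $x_i$, so $f'>\nu$ throughout. Integrating yields $f(x_i+\mu/\nu) > \mu$, contradicting $|f|<\mu$ at that endpoint. Closedness of the level set $\{|f|=\mu\}$ (from continuity of $f$) then ensures that the infimum is attained and $|f(b')|=\mu$; the argument for $a'$ is symmetric.

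Next I verify the sign change $f(a')f(b')<0$. By the minimality of $b'$ and the maximality of $a'$, one has $|f|<\mu$ on the open interval $(a',b')$; hence $|f'|>\nu$ there, so $f'$ is continuous and nowhere vanishing on $(a',b')$ and therefore keeps a constant sign. Consequently $f$ is strictly monotone on $(a',b')$, and since $f(x_i)=0$ lies strictly between $f(a')$ and $f(b')$, these two values have opposite signs; their moduli equal $\mu$ by construction.

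For the disjointness claim, given two roots $x_i<x_j$ in $I$, strict monotonicity of $f$ on $(a_i',b_i')$ together with $f(x_i)=0$ implies $f\ne 0$ on $(x_i,b_i']$, so $x_j>b_i'$; then $b_i'$ belongs to the set $\{y<x_j:|f(y)|=\mu\}$ whose supremum defines $a_j'$, yielding $a_j'\ge b_i'$ and hence $I(x_i)\cap I(x_j)=\emptyset$. The only mildly delicate step is the sign-change argument in the third paragraph, where one must use continuity of $f'$ on $(a',b')$ to propagate the sign of $f'(x_i)$ all the way to the endpoints without the derivative slipping through zero; this is handled cleanly by confining the argument to the open sub-interval where $|f|<\mu$ and thus $|f'|>\nu$.
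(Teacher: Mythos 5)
Your proof is correct and follows essentially the same route as the paper's: both identify $I(x_i)$ with the connected component of $\{|f|<\mu\}$ containing the root, use the hypothesis to get $|f'|>\nu$ (hence strict monotonicity) on that component, and deduce the length bound, the sign change at the endpoints, and disjointness from it. Your write-up is somewhat more explicit about the first-hitting construction of $a',b'$ and the monotonicity-based disjointness (the paper phrases the latter via Rolle), but the underlying argument is the same.
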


\begin{proof} We may and will assume that $f$ is not constant on $I$.
By changing $f(x)$ to $\la_1 f(\la_2 x)$ for appropriate $\lambda_1,\lambda_2$, it suffices to consider $\mu=\nu=1$.
For each root $x_i$, and for $0< t \le 1$ consider the interval $I_t(x_0)$ containing $x_0$ of those points $x$ where $|f(x)| < t$. We first show that for any $0<t_1,t_2\le 1$ we have that $I_{t_1}(x_1)$ and $I_{t_2}(x_2)$ are disjoint for distinct roots $x_i \in I$ satisfying the lemma's assumption. Assume otherwise, because $f(x_1)=f(x_2)=0$, there exists $x_1<x<x_2$ such that $f'(x)=0$ and $|f(x)| \le \min\{t_1,t_2\}$, and so contradicts with our assumption. We will also show that $I_1(x_0)\subset (x_0-1,x_0+1)$. Indeed, assume otherwise for instance that $x_0-1\in I_1(x_0)$, then for all $x_0-1<x<x_0$ we have $|f(x)|<1$, and so $|f'(x)|>1$ over this interval. Without loss of generality we assume $f'(x)>1$ for all $x$ over this interval. The mean value theorem would then imply that $|f(x_0-1)|= |f(x_0-1)-f(x_0)|>1$, a contradiction with $x_0-1 \in I_1(x_0)$.  As a consequence, we can define $I(x_i)=I_1(x_i)$, for which at the endpoints the function behaves as desired.  
\end{proof}

\begin{corollary}\label{cor:separation} Fix positive $\mu$ and $\nu$. Let $I=(a,b)$ be an  interval of length at least $2 \mu/\nu$, and let $f$ be a $C^1$-function on $I$  such that at each point $x\in I$ we have either $|f(x)|> \mu$ or $|f'(x)| > \nu$. Let $g$ be a function such that $|g(x)|< \mu$ over $I$. Then for each root $x_i \in I$ of $f$ with $x_i-a>\mu/\nu$ and $b-x_i>\mu/\nu$ we can find a root $x_i'$ of $f+g$ such that $x_i' \in (x_i-\mu/\nu, x_i + \mu/\nu)$, and also the $x_i'$ are distinct.
\end{corollary}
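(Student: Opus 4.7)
The plan is to reduce this to Lemma \ref{lemma:separation} together with an intermediate value argument on the perturbed function. The lemma already gives, for each qualifying root $x_i$ of $f$, a subinterval $I(x_i) = (a_i', b_i')$ inside $(x_i - \mu/\nu, x_i + \mu/\nu)$ on which $|f(a_i')| = |f(b_i')| = \mu$ and $f(a_i')f(b_i') < 0$, and furthermore the intervals $I(x_i)$ are pairwise disjoint. That is essentially all of the geometric work; what remains is to upgrade these to roots of $f+g$.

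Concretely, I would first apply Lemma \ref{lemma:separation} verbatim to obtain the family $\{I(x_i)\}$. At each endpoint $a_i'$ of such an interval we have $|f(a_i')| = \mu$, while $|g(a_i')| < \mu$ by hypothesis, so $(f+g)(a_i')$ has the same sign as $f(a_i')$; the same reasoning applies at $b_i'$. Hence $(f+g)(a_i')(f+g)(b_i') < 0$, and since $f+g$ is continuous (being $C^1$ as the sum of $f$ and a function with $|g|<\mu$, though continuity is all we need here) the intermediate value theorem yields a root $x_i'$ of $f+g$ inside $I(x_i)$. In particular $x_i' \in (x_i - \mu/\nu, x_i + \mu/\nu)$, as required.

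Distinctness of the roots $x_i'$ comes for free from the disjointness of the intervals $I(x_i)$ produced by the lemma: each $x_i'$ lies in its own $I(x_i)$, and no two of these intersect, so no two of the $x_i'$ can coincide.

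The only conceptual point worth checking is whether $g$ actually needs to be continuous; the proof above uses only that $f+g$ is continuous, but since we are applying this corollary to trigonometric polynomials later on, this is not restrictive. There is no real obstacle — the heavy lifting (constructing sign-alternating windows $I(x_i)$ that do not overlap) has already been done in Lemma \ref{lemma:separation}, and the perturbation step is just a robust version of the intermediate value theorem.
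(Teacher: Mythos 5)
Your proof is correct and is exactly the argument the paper intends: the corollary is stated without proof because it follows immediately from Lemma \ref{lemma:separation} via the sign comparison at the endpoints of the intervals $I(x_i)$ and the intermediate value theorem, with distinctness inherited from the disjointness of those intervals. Your remark that continuity of $g$ is needed (and harmless in the intended application to trigonometric polynomials) is a fair observation about an implicit hypothesis.
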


Now we prove Theorem \ref{thm:main} by considering the two tails separately.

\subsection{The lower tail}  We need to show that
\begin{equation}\label{eqn:lowertail}
\P(  N_n  \le \E N_n  - \eps n ) \le e^{-c' \eps^9 n}.
\end{equation}

With the parameters $\al,\beta, \delta, \tau,R$ chosen as in  Corollary \ref{cor:manyroots},
 consider a non-exceptional polynomial $P_n$. Let $g$ be a trigonometric polynomial with $\|g\|_2 \le \tau$, where $\tau$ is chosen as in \eqref{eqn:parameters}.  Consider a stable interval $I_j$ with respect to $P_n$ (there are at least $(\frac{2\pi}{R} -\delta)n$ such intervals). We first notice that the number of stable intervals $I_j$ over which $\max_{x \in 3I_j} |g(x)| > \al$ is at most at most $O(\delta n)$. Indeed, assume that there are $M$ such intervals $3I_j$. Then we can choose $M/6$ such intervals that are $R/n$-separated. By Theorem \ref{thm:largesieve} we have $(M/6) \al^2 \le n \tau^2$, which implies $M \le 6n (\tau \al^{-1})^2 =O(\delta n)$. From now on we will focus on the stable intervals with respect to $P_n$ on which $|g|$ is smaller than $\al$. 


By Corollary \ref{cor:separation} (applied to $I= 3I_j$ with $\mu=\al$ and $\nu =\beta n$, note that $\al/ \beta \asymp \delta^{3/4} <R$), because $\max_{x \in 3I_j} |g(x)| < \al$, the number of roots of $P_n+g$  over each interval $I_j$ is at least as that of $P_n$. Hence if $P_n$ is such that $N_n\ge \E N_n - \eps n/2$ and also $P_n$ has at least $ \E N_n - 2\eps n/3$ roots over the stable intervals, then by Corollary \ref{cor:manyroots}, with appropriate choice of the parameters, $P_n$ has at least $ \E N_n - \eps n$ roots over the stable intervals $I_j$ above where $|g| \le \al$, and hence Corollary \ref{cor:separation} implies that $P_n+g$ has at least $\E N_n-\eps n$ roots over these stable intervals $I_j$. In particularly $P_n+g$ has at least $\E N_n-\eps n$ roots over $\BT$. Let $\CU^{lower}$ be the collection of $\Bv_{P_n}$ from such $P_n$.  Then by Corollary \ref{cor:control} 
and \eqref{eqn:Markov} 
\begin{equation}\label{eqn:Uast}
\P(\Bv_{P_n} \in \CU^{lower}) \ge  1- \P\big(N_n\le \E N_n - \eps n/2\big) - \P\big(N_s(P_n) 1_{P_n \in \CE_e^c} \le \E N_n- \eps n\big)  \ge 1/2.
\end{equation}

 \begin{proof}(of Equation \eqref{eqn:lowertail}) By our application of Corollary \ref{cor:separation} above, the set $\{\Bv, d_2(\Bv,\CU^{lower}) \le \tau\sqrt{2n}\}$ is contained in the set of having at least $\E N_n - \eps n$ roots. Furthermore, \eqref{eqn:Uast} says that $\P(\Bv_{P_n} \in \CU^{lower}) \ge 1/2$. Hence by Theorems \ref{thm:bounded} and \ref{thm:sobolev} 
$$
\P( N_n  \ge \E N_n  - \eps n) \ge \P\Big(\Bv_{P_n}\in \big\{\Bv, d_2(\Bv,\CU^{lower}) \le \tau\sqrt{n}\big \}\Big) \ge 1-\exp(- c' \eps^9 n),
$$
where we used the fact that $\tau \asymp \delta^2$ from \eqref{eqn:parameters} and that $\delta$ satisfies \eqref{eqn:delta:upper}.
 \end{proof}

\subsection{The upper tail}
Our goal here is to justify the upper tail
\begin{equation}\label{eqn:uppertail}
\P(  N_n  \ge \E N_n  + \eps n ) \le e^{-c' \eps^9  n}.
\end{equation}

Let $\CU^{upper}$ denote the set of $\Bv_{P_n}$ for which  $N_n \ge E N_n+ \eps n$. By Theorem \ref{thm:exceptional} it suffices to assume that $P_n$ is non-exceptional.

 
 \begin{proof}(of Equation \eqref{eqn:uppertail}) Assume that for a non-exceptional $P_n$ we have $N_n \ge \E N_n + \eps n$. Then by  Lemma \ref{lemma:manyroots'} (Corollary \ref{cor:manyroots}) the number of roots of $P_n$ over the stable intervals is at least $ \E N_n +2\eps n/3$. Let us call the collection of $\Bv_{P_n}$ of these polynomials by $\CS^{upper}$. Then argue as in the previous subsection (with the same parameters of $\al, \beta, \tau, \delta$), Corollary \ref{cor:manyroots} and Corollary \ref{cor:separation} imply that any $h=P_n+g$ with $\|g\|_2 \le \tau$ has at least $\E N_n +  \eps n /2$ roots. On the other hand, we know by \eqref{eqn:Markov} that the probability that $P_n$ belongs to this set of trigonometric polynomials is smaller than $1/2$. It thus follows by Theorems \ref{thm:bounded} and \ref{thm:sobolev} that 
$$\P(\Bv_{P_n}\in  \CU^{upper}) \le e^{-c' \eps^9  n},$$
where we again used that $\tau \asymp \delta^{2}$ and $\delta$ satisfies \eqref{eqn:delta:upper}.
\end{proof}

\appendix

\section{proof of Theorem \ref{thm:fourier}}\label{section:fourier} 
We first briefly show that the vectors $\Bv_i$ and $\Bv_i'$ with $t$ from Condition \ref{cond:t} spread out in the plane. This result was used in Section \ref{section:repulsion}, and will also be useful below.

\begin{claim}\label{claim:t} Assume that $t$ satisfies Condition \ref{cond:t} with a sufficiently large constant $C_0'$. Let $I = \{a+i, 0\le i\le L\} \subset [n]$ be any interval of length $L$ of $[n]$ with $L \ge n^{1-4 \tau}$. Then
\begin{enumerate} 
\item For any unit vector $\Be \in \R^2$ we have 
\begin{equation}\label{eqn:Be:t}
\sum_{i\in I} \langle \Be, \Bv_i \rangle^2 \asymp L^3/n^2 \mbox{ and } \sum_{i\in I} \langle \Be, \Bv_i'\rangle^2  \asymp L^3/n^2.
\end{equation}
\item For all $\eps_1,\eps_2\in \{-1,1\}$, and any positive integer $A_0 \le \sqrt{C_0'}$, there exists an $i\in I$ so that  
$$\eps_1 \sin(i A_0 t), \eps_2 \cos (i A_0t)>0.$$
\end{enumerate}
\end{claim}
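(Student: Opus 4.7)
For part (1), I will write $\Be = (e_1, e_2)$ and expand
$$\langle \Be, \Bv_i \rangle^2 = e_1^2 \cos^2(it) - 2 e_1 e_2 (i/n) \cos(it)\sin(it) + e_2^2 (i/n)^2 \sin^2(it);$$
applying the double-angle identities decomposes this into a smooth non-oscillatory part $\tfrac{1}{2}\bigl(e_1^2 + e_2^2 (i/n)^2\bigr)$ and an oscillatory remainder that is a linear combination of $\cos(2it)$ and $\sin(2it)$ weighted by $1$, $i/n$, and $(i/n)^2$. Summing over $i\in I$, the smooth piece produces a main term of the claimed order. For the oscillatory piece I will invoke Condition \ref{cond:t} with $k=2$ (legitimate once $C_0'\ge 2$), which gives $|\sin t| \gtrsim \|2t/\pi\|_{\R/\Z} \gtrsim n^{-1+8\tau}$. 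The bare geometric sum then satisfies $\bigl|\sum_{i\in I} e^{2it\cdot i}\bigr| \le |\sin t|^{-1} = O(n^{1-8\tau})$, and Abel summation transfers this bound to the $(i/n)$- and $(i/n)^2$-weighted versions with at most constant loss. Since $L \ge n^{1-4\tau}$, the smooth main term dominates the oscillatory error. The analysis for $\Bv_i'$ is parallel after swapping the roles of sine and cosine.

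For part (2), the key observation is that $A_0 \le \sqrt{C_0'}$ implies $k A_0 \le C_0'$ for every $|k|\le \sqrt{C_0'}$, so Condition \ref{cond:t} provides $\|k A_0 t/\pi\|_{\R/\Z} \ge n^{-1+8\tau}$ for each such $k$. I will then apply an Erd\H{o}s--Tur\'an type discrepancy inequality whose only input is the geometric-series estimate $\bigl|\sum_{i\in I} e^{i k A_0 t \cdot i}\bigr| = O(n^{1-8\tau})$ for each fixed small $k$; together with $L \ge n^{1-4\tau} \gg n^{1-8\tau}$, this shows that the sequence $\{i A_0 t \bmod 2\pi : i\in I\}$ equidistributes on the circle up to $o(1)$ discrepancy. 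In particular each of the four open quadrants determined by the signs of $\sin$ and $\cos$ is visited for some $i\in I$, yielding the required point.

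The main obstacle I foresee lies in part (1): one must confirm that the non-oscillatory contribution $\sum_{i\in I}\tfrac{1}{2}\bigl(e_1^2 + e_2^2 (i/n)^2\bigr)$ genuinely matches the claimed asymptotic uniformly over unit vectors $\Be$ and over the position of the subinterval $I$ inside $[n]$. The constraint $e_1^2 + e_2^2 = 1$ rules out simultaneous degeneracy of the two summands, and an elementary case split on whether $e_1^2\ge 1/2$ or $e_2^2\ge 1/2$, combined with a direct evaluation of $\sum_{i\in I}(i/n)^2$, should close the argument; this is also where the precise exponent $1-4\tau$ in the length hypothesis gets used, since it ensures the error $O(n^{1-8\tau})$ is swallowed by the main term with a power-of-$n$ margin.
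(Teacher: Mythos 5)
Your part (2) is correct but follows a genuinely different route from the paper: the paper argues by pigeonhole that if the orbit $\{iA_0t\}_{i\in I}$ missed one of the four quadrants then some bounded $k_0$ would satisfy $\|k_0 A_0 t/\pi\|_{\R/\Z}=O(1/L)$, contradicting Condition \ref{cond:t}, whereas you reach the same conclusion via Erd\H{o}s--Tur\'an with a bounded number of harmonics $k\le K$. Both arguments need all frequencies $kA_0$ used to stay below $C_0'$, which is why $A_0\le \sqrt{C_0'}$ appears; note only that your discrepancy bound is $O(1/K)+O(Kn^{-4\tau})$ with $K$ capped by a constant, so it is small but not $o(1)$ --- still below $1/4$ for $C_0'$ large, which is all you need. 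This part is fine and arguably more quantitative than the paper's.

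Part (1) has a genuine quantitative gap. You bound the oscillatory error by the bare geometric sum ``with at most constant loss,'' i.e.\ by $O(1/|\sin t|)=O(n^{1-8\tau})$, which amounts to bounding the Abel weights $i/n$ and $(i/n)^2$ by $1$. But the smooth main term can be as small as $\asymp L^3/n^2$ --- take $e_1=0$, $e_2=1$ and $I$ at the left end of $[n]$ --- and at the minimal length $L=n^{1-4\tau}$ this is $\asymp n^{1-12\tau}$, which is \emph{smaller} than your error bound $n^{1-8\tau}$. So ``the smooth main term dominates the oscillatory error'' fails precisely in the worst case that determines the constant in \eqref{eqn:Be:t}. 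The repair is to keep the true size of the weights in the summation by parts: over $I=\{a,\dots,a+L\}$ the weight $(i/n)^k$ has supremum and total variation $O(((a+L)/n)^k)$, so the oscillatory piece is $O\big(((a+L)/n)^k\, n^{1-8\tau}\big)$, while the matching smooth piece is $\gg L\,((a+L)/n)^k$ for $k=2$ (for the $k=1$ cross term, use $2|e_1e_2|(a+L)/n\le e_1^2+e_2^2((a+L)/n)^2$ to absorb it into the two diagonal main terms); the ratio of error to main term is then $O(n^{1-8\tau}/L)=O(n^{-4\tau})$. This is exactly what the paper's computation achieves implicitly: writing the weighted sum as $\frac{1}{4n}\,\partial_t\big(\sum_{i\in I}\cos(2it+2t_0)\big)$ yields $O\big(\frac{L}{n|\sin t|}+\frac{1}{n\sin^2 t}\big)$, in which the crucial factors $L/n$ and $1/n$ survive. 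Finally, your closing worry about the statement itself is well founded: when $e_1^2\ge 1/2$ and $L\ll n$ the sum is $\asymp L$, not $\asymp L^3/n^2$, so only the lower bound in \eqref{eqn:Be:t} holds uniformly; that lower bound is the only thing used elsewhere in the paper.
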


\begin{proof} For the first statement, it suffices to show \eqref{eqn:Be:t} for $\Bv_i$, the treatment for $\Bv_i'$ is similar. Assume that $\Be=(x_1,x_2)$, then we can write
\begin{align*}
\sum_{i\in I} \lang \Be, \Bv_i \rang^2 &= \sum_{i\in I} (x_1 \cos(it + t_0) - x_2 (i/n) \sin(i t +t_0) )^2
\end{align*}
for some fixed $t_0$.
Clearly the sum over the diagonal terms is at least $L^3/3n^2$. For the cross term, consider
\begin{align*}
S = |\sum_{i\in I} (i/n) \cos(it+ t_0) \sin (it+t_0)| &=  |\frac{1}{2}\sum_{i\in I} (i/n) \sin ( 2 it+2 t_0)| = \frac{1}{4n}  | \frac{\partial}{\partial t}(\sum_{i\in I} \cos( 2 it + 2 t_0))| \\
&= \frac{1}{4n}   |  \frac{\partial}{\partial t} \Re[e(2 t_0) \sum_{i=0}^L e( 2 it)]|=  \frac{1}{4n}   |  \frac{\partial}{\partial t} \Re[e(2 t_0) \frac{e(2(L+1)t)-1}{e(2t)-1}]|.
\end{align*}
After some simplifications we obtain 
\begin{align*}
|S| &=O\Big(\frac{L}{n}\frac{1}{|\sin t|} +  \frac{1}{n} \frac{1}{(\sin t)^2} \Big) = o(L^3/n^2),
\end{align*}
where we used the assumption $L\ge n^{1-4\tau}$ and Condition \ref{cond:t} that $\|t/\pi\|_{\R/ \Z} \ge n^{-1+8\tau}$.

Now we focus on the second part. By pigeonholing it is easy to see that if the angle sequence $\{i (A_0t) +a A_0 t, 0 \le i \le L\}$ does not occupy all four quarters of the plane, then there exists  a positive integer $k_0 =O(1)$ such that 
$$\|k_0 (A_0 t ) /\pi\|_{\R/\Z} = O(\frac{1}{L}) = O(\frac{1}{n^{1-8\tau}}).$$
This contradicts with Condition \ref{cond:t}.


\end{proof}



We now discuss the proof of Theorem \ref{thm:fourier}.
 Our treatment is similar to \cite[Lemma 4.3]{KSch} but it is more direct and works for more general ensembles beside the Bernoulli case. Also, here we allow the parameter $\BD$ (see below) to be in the range $n^{-1/2+o(1)}\le \|\BD\|_2 \le n^{1-o(1)}$ rather than $\|\BD\|_2 \le n^{1/2-o(1)}$ as in \cite{KSch}, but this difference is minimal.

 For short, let 
$$r=r_n=n^{5\tau -1/2}.$$ 
Recall from \eqref{eqn:cahr:bound} and \eqref{eqn:y} that 
\begin{align*}
|\prod \phi_i(  x)| &\le \exp(-(\sum_i \|\langle \Bv_i, x/2\pi \rangle\|_\xi^2 +  \|\langle \Bv_i', x/2\pi \rangle\|_\xi^2)/2)\\
&\le \exp(-c_3 \inf_{c_1\le |y| \le c_2} \sum_i \| y \langle \Bv_i, x/2\pi \rangle\|_{\R/\Z}^2+\sum_i \| y \langle \Bv_i', x/2\pi \rangle\|_{\R/\Z}^2).
\end{align*}


Hence for Theorem \ref{thm:fourier} it suffices to show that for any $\BD=(d_1,d_2)$ (which plays the role of $(y/2\pi)x$) such that $c_1r \le \|\BD\|_2\le c_2 n^{1-8\tau}$ we have 
\begin{equation}\label{eqn:thm:fourier}
\sum_i \|\langle \Bv_i,   \BD \rangle\|_{\R/\Z}^2 +  \|\langle \Bv_i',   \BD \rangle\|_{\R/\Z}^2 \ge n^{\tau}.
\end{equation}

For convenience, let  
 \begin{equation}\label{eqn:psi1}
 \psi_i = d_1 \cos(it) -d_2 \frac{i}{n} \sin (it) \mbox{ and } \psi_i' = d_1 \sin(it/n) +d_2 \frac{i}{n} \cos (it) .
\end{equation}
In other words,
$$\psi_i = \langle \BD, \Bv_i \rangle \mbox{ and } \psi_i' = \langle \BD, \Bv_i' \rangle.$$
with

Let $\Be$ be the unit vector in the direction of $D$, $\Be = \frac{\BD}{\|\BD\|_2}$. Our key ingredient in the proof of Theorem \ref{thm:fourier} is the following.

\begin{lemma}\label{lemma:differencing} Suppose that $c_1r \le \|\BD\|_2 \le c_2 n^{1-8\tau}$ and
$$|\{j \in [n]: \|\psi_j\|_{\R/\Z} + \|\psi_j'\|_{\R/\Z} > n^{-\tau}\}|\le n^{3\tau}.$$
Then for large $n$ there exists an interval $J \subset [n]$ of length $n^{1-6\tau}$ so that 
$$\sum_{j\in J} |\langle \Bv_j, \Be \rangle|^2 +  |\langle \Bv_j', \Be \rangle|^2 \ge n^{1-8\tau} \mbox{ and } \sup_{j\in J} |\psi_j| + \psi_j'| \le n^{-\tau}.$$ 
\end{lemma}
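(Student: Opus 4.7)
The plan is to exhibit $J$ via a pigeonhole on the bad set (placed in the upper half of $[n]$), and to derive the quadratic lower bound from an exact algebraic identity, thereby bypassing Claim \ref{claim:t} entirely. The key observation is the pointwise identity
$$\langle \Bv_j, \Be\rangle^2 + \langle \Bv_j', \Be\rangle^2 \;=\; e_1^2 + e_2^2\,\frac{j^2}{n^2},$$
valid for any unit vector $\Be=(e_1,e_2)$; this follows from $\cos^2(jt)+\sin^2(jt)=1$ together with the exact cancellation of the cross terms between $\Bv_j$ and $\Bv_j'$.

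Next, I partition $\{\lceil n/2\rceil,\ldots,n\}$ into at least $n^{6\tau}/3$ disjoint consecutive blocks of length $n^{1-6\tau}$. The bad set $B:=\{j\in [n]:\|\psi_j\|_{\R/\Z}+\|\psi_j'\|_{\R/\Z}>n^{-\tau}\}$ has cardinality at most $n^{3\tau}\ll n^{6\tau}/3$ for $n$ large, so by pigeonhole at least one block $J\subset[\lceil n/2\rceil, n]$ contains no index of $B$. On this $J$, the bound $\sup_{j\in J}(\|\psi_j\|_{\R/\Z}+\|\psi_j'\|_{\R/\Z})\le n^{-\tau}$ holds by construction, giving the second conclusion. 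Since $j\ge n/2$ on $J$ implies $(j/n)^2\ge 1/4$, summing the identity yields
$$\sum_{j\in J}\!\left(\langle\Bv_j,\Be\rangle^2+\langle\Bv_j',\Be\rangle^2\right) = e_1^2|J|+e_2^2\sum_{j\in J}\frac{j^2}{n^2} \;\ge\; e_1^2|J|+\frac{e_2^2|J|}{4} \;\ge\; \frac{|J|}{4} \;=\; \frac{n^{1-6\tau}}{4} \;\ge\; n^{1-8\tau}$$
for $n$ large, using $e_1^2+e_2^2=1$ and $1-6\tau>1-8\tau$; this establishes the first conclusion.

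The one interpretive point worth flagging is the second conclusion, which as printed ``$\sup_{j\in J}|\psi_j|+|\psi_j'|\le n^{-\tau}$'' I read as a typographical rendering of $\sup_{j\in J}(\|\psi_j\|_{\R/\Z}+\|\psi_j'\|_{\R/\Z})\le n^{-\tau}$: a literal absolute-value reading would force $\|\BD\|_2\lesssim n^{-\tau}$ as a free-standing consequence (via $|\psi_j|^2+|\psi_j'|^2=d_1^2+d_2^2(j/n)^2\ge \|\BD\|_2^2/4$ on the chosen $J$), which is not available purely from the fractional-part hypothesis. Under the fractional-part reading the argument above goes through without any real obstacle; the only minor technical point is making the pigeonhole work with the specific exponents $6\tau$ versus $3\tau$, which is comfortable since $n^{6\tau}\gg n^{3\tau}$ for every $\tau>0$ and $n$ large.
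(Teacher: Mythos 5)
There is a genuine gap, and it sits exactly at the point you flagged and then talked yourself out of. The absolute values in the second conclusion are not a typographical slip for $\|\cdot\|_{\R/\Z}$: they are the entire content of the lemma. The hypothesis only controls the \emph{fractional parts} of $\psi_j,\psi_j'$, while the conclusion asserts control of their \emph{actual values} on $J$, i.e.\ that the nearest integer $m_j$ to $\psi_j$ (and to $\psi_j'$) vanishes there. This upgrade is what the application requires: in the deduction of \eqref{eqn:thm:fourier} one writes, for $j\in J$, $\|\langle \Bv_j,\BD\rangle\|_{\R/\Z}=|\langle\Bv_j,\BD\rangle|=\|\BD\|_2\,|\langle\Bv_j,\Be\rangle|$, and only then does the lower bound $\sum_{j\in J}\langle\Bv_j,\Be\rangle^2+\langle\Bv_j',\Be\rangle^2\ge n^{1-8\tau}$ convert into $\sum_j\|\langle\Bv_j,\BD\rangle\|_{\R/\Z}^2\ge (c_1r)^2n^{1-8\tau}\ge n^{2\tau}$. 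With only the fractional-part version of the conclusion, that step is unavailable and the proof of Theorem \ref{thm:fourier} collapses. Your objection that the literal reading ``would force $\|\BD\|_2\lesssim n^{-\tau}$'' is not a reductio: it is consistent with the hypothesis range $\|\BD\|_2\ge c_1 n^{5\tau-1/2}$ (since $\tau\le 1/64$), and the correct way to read the lemma is that for larger $\|\BD\|_2$ the smallness hypothesis on the fractional parts must simply fail — which is precisely the dichotomy exploited afterwards.

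What your argument proves is only the easy half: a pigeonhole producing an interval $J$ disjoint from the bad set together with the quadratic lower bound, the latter via the exact identity $\langle\Bv_j,\Be\rangle^2+\langle\Bv_j',\Be\rangle^2=e_1^2+e_2^2j^2/n^2$ (which is correct, and is a clean shortcut past the cross-term estimate in Claim \ref{claim:t}(1)). The missing — and main — part of the paper's proof is the ``Differencing'' and ``Vanishing integral part'' steps: one approximates $t$ by a rational with denominator $p_0=O(\sqrt{C_0'})$, shows that the $k$-th finite differences of the integers $m_{j+lp_0}$ along $J$ are smaller than $1$ for $k\asymp\log n$ (using $|1-e^{\sqrt{-1}p_0t}|\le 4\pi/A$ and the smallness of the fractional parts), concludes that $l\mapsto m_{j+lp_0}$ is a polynomial of degree $<k_0$, then that it is constant (a nonconstant integer-valued polynomial bounded by $\|\BD\|_2\le n^{1-8\tau}$ would have to be monotone on too long a subinterval), and finally that the constant is $0$ because $\psi_{j+lp_0}$ changes sign along $J$ by Claim \ref{claim:t}(2) — this is where Condition \ref{cond:t} enters essentially and cannot be bypassed. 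None of this appears in your proposal, so as written it establishes a strictly weaker statement that does not support the rest of the argument.
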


\begin{proof}(of Equation \eqref{eqn:thm:fourier}) Recall that $\Be$ is the unit vector $\BD/\|\BD\|_2$. If $|\{j \in [0,n) \cap \Z: \|\psi_j\|_{\R/\Z} > n^{-\tau}\}|\ge n^{3\tau}$ then we have 
$$\sum_i \|  \langle \Bv_i,   \BD \rangle\|_{\R/\Z}^2 + \sum_i \|  \langle \Bv_i',   \BD \rangle\|_{\R/\Z}^2  \ge n^{-2 \tau} n^{3\tau} = n^{\tau}.$$
Assume otherwise, we write $\sum_i \|\langle \Bv_i,   \BD \rangle\|_{\R/\Z}^2+\sum_i \|\langle \Bv_i',   \BD \rangle\|_{\R/\Z}^2   = \sum_i \| \|\BD\|_2 \langle \Bv_i,   \Be \rangle\|_{\R/\Z}^2+\sum_i \| \|\BD\|_2 \langle \Bv_i',   \Be \rangle\|_{\R/\Z}^2$. Then by Lemma \ref{lemma:differencing}, there exists an interval $J \subset [0,n)$ so that 
$$\sum_{j\in J} |\langle \Bv_j, \Be \rangle|^2 + |\langle \Bv_j', \Be \rangle|^2 \ge n^{1-8\tau} \mbox{ and } \sup_{j\in J} |\psi_j| + |\psi_j'| \le n^{-\tau}.$$ 
Then as for these indices $|\psi_j|= \|\psi_j\|_{\R/\Z}$ and $|\psi_j'|= \|\psi_j'\|_{\R/\Z}$ we have
\begin{align*}
\sum_i \|\langle \Bv_i,   \BD \rangle\|_{\R/\Z}^2+ \sum_i \|\langle \Bv_i',   \BD \rangle\|_{\R/\Z}^2  & \ge \sum_{j\in J} \|\langle \Bv_j,   \BD \rangle\|_{\R/\Z}^2 + \|\langle \Bv_j',   \BD \rangle\|_{\R/\Z}^2 = \sum_{j\in J} \|\langle \Bv_j,   \BD \rangle\|_2^2 +  \|\langle \Bv_j',   \BD \rangle\|_2^2\\
&= \|\BD\|_2^2 \sum_{j\in J} \langle \Bv_j, \Be\rangle ^2 + \langle \Bv_j', \Be\rangle ^2 \ge (c_1r)^2 n^{1-8\tau} \ge n^{2\tau}.
\end{align*}
\end{proof}

\begin{proof}(of Lemma \ref{lemma:differencing}) We decompose into several steps. First recall that 
$$|\{j \in [0,n) \cap \Z: \|\psi_j\|_{\R/\Z}+\|\psi_j'\|_{\R/\Z} > n^{-\tau}\}|\le n^{4\tau} \mbox{ and } \sum_{j\in [n]}  |\langle \Bv_j, \Be \rangle|^2 +  |\langle \Bv_j', \Be \rangle|^2\ge c' n.$$
Divide $[n]$ into $n^{4 \tau}$ disjoint intervals $J_i$ of length $n^{1-4\tau}$ each. For each $i$, define 
$$s_i := \sum_{j\in J_i}  |\langle \Bv_j, \Be \rangle|^2+ |\langle \Bv_j', \Be \rangle|^2.$$
Then we trivially have $s_i \le |J_i| \le n^{1-4\tau}$, and $\sum_{i\le n^{4\tau}} s_i\ge n^{1-\tau}$.  Let $x$ be the number of intervals with $s_i$ larger than $n^{1-8\tau}$. Then we have 
 $$x n^{1-4\tau} + (n^{4\tau}-x) n^{1-8\tau} \ge  c' n.$$
 It follows that
 $$x \ge \frac{c'n - n^{1-4\tau}}{n^{1-4\tau} - n^{1-8\tau}} > n^{3 \tau}.$$
 As such, we have found an interval $J=J_i$ of length $n^{1-4\tau}$ in $[n]$ for which 
$$\sup_{j\in J} |\langle \Bv_j, \Be \rangle|^2 + |\langle \Bv_j', \Be \rangle|^2 \ge n^{1-8\tau}$$
and for all $j\in J$ we have 
$$\|\psi_j\|_{\R/\Z} + \|\psi_j'\|_{\R/\Z} \le n^{-\tau}.$$
Our goal is to show that for $j\in J$ we indeed have 
\begin{equation}\label{eqn:psi:RZ}
\|\psi_j\|_{\R/\Z}= |\psi_j| \mbox { and }  \|\psi_j'\|_{\R/\Z} = |\psi_j'|.
\end{equation}
This would then automatically imply Lemma \ref{lemma:differencing} with $J$ as above. In what follows, without loss of generality we just show $\|\psi_j\|_{\R/\Z}= |\psi_j|$, the treatment for $\psi'$ is similar.

{\bf Differencing.} For short let $A:=\lfloor \sqrt{C_0'} \rfloor$ (where we recall that $C_0'$ is chosen sufficiently large in Condition \ref{cond:t}). By pigeonholing we can find $p_0\in \Z, p_0 \neq 0$ and $t_0$ so that  
\begin{equation}\label{eqn:approx:pq}
p_0 \frac{t}{2\pi} -t_0 \in \Z, 1\le |p_0| \le A, |t_0| \le \frac{4}{A}.
\end{equation}
From the approximation we infer that 
\begin{equation}\label{eqn:closeone}
|e^{\sqrt{-1} p_0 t} -1|= |e^{-\sqrt{-1} (2\pi t_0)}-1| \le |2\sin(\pi t_0)| \le 4 \pi/A.
\end{equation}

Next consider 
$$\Delta^l \psi_{j+ l p_0 } = \sum_{i=0}^k \binom{k}{i} (-1)^i \psi_{j+ (i+l)p_0}.$$
 Let $m_j$ be the integer closest to $\psi_j$, then for $j\in J$ we have $|\psi_j - m_j| \le n^{-\tau}$. Applying the argument in \cite[Lemma 4.3]{KSch} (with $\phi(j/n) = j/n$) we will show
\begin{lemma}\label{lemma:diff} We have
\begin{equation}\label{eqn:Delta:diff}
|\Delta^k m_{j+lp_0}|  \le 4\|\BD\|_2 \frac{(4\pi)^{k}}{A^{(k-3)/2}} + 4 \times 2^{k}n^{-\tau}
\end{equation}
provided that $[j+ l p_0, j+(l+k) p_0 ] \subset J$.
\end{lemma}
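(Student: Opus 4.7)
My plan is to decompose $\Delta^k m_{j+lp_0} = \Delta^k\psi_{j+lp_0} + \Delta^k(m-\psi)_{j+lp_0}$ and treat the two pieces separately. The rounding part is immediate: since $|m_{j'} - \psi_{j'}| \le n^{-\tau}$ for every $j'$ in the chain $[j+lp_0,\, j+(l+k)p_0]$, which lies in $J$ by hypothesis, and since $\sum_{i=0}^k\binom{k}{i} = 2^k$, the triangle inequality gives $|\Delta^k(m-\psi)_{j+lp_0}| \le 2^k n^{-\tau}$, which already supplies the second term of the asserted bound with room to spare. The real work is to estimate $|\Delta^k\psi_{j+lp_0}|$ by $4\|\BD\|_2(4\pi)^k / A^{(k-3)/2}$, and this is where the specific choice of $p_0$ in \eqref{eqn:approx:pq} enters.

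To obtain a usable closed form, I would rewrite $\psi_{j'} = \Re\bigl[(d_1 + id_2 j'/n)\,e^{ij't}\bigr]$ and, setting $j' = j + l'p_0$, $z := e^{ip_0 t}$, $u := e^{ijt}$, note that
\[
\psi_{j+l'p_0} = \Re\bigl[(A + Bl')\,u\,z^{l'}\bigr], \qquad A := d_1 + id_2 j/n,\quad B := id_2 p_0/n.
\]
The key structural observation is that the amplitude $A + Bl'$ is \emph{affine} in $l'$, so Leibniz' rule for finite differences, applied to the product $(A+Bl')z^{l'}$, collapses to two terms. Combining the elementary identities $\sum_i \binom{k}{i}(-1)^i z^{l+i} = z^l(1-z)^k$ and $\sum_i \binom{k}{i}(-1)^i i\, z^{l+i} = -k z^{l+1}(1-z)^{k-1}$ then yields the compact expression
\[
\Delta^k \psi_{j+lp_0} = \Re\Bigl[u\,z^l(1-z)^{k-1}\bigl((A+Bl)(1-z) - Bkz\bigr)\Bigr].
\]

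The final step is a triangle-inequality estimate. From \eqref{eqn:closeone}, $|1-z| \le 4\pi/A$; the bound $|A + Bl| \le \|\BD\|_2$ holds because $j+lp_0 \in [n]$ forces $|(j+lp_0)/n| \le 1$; and $|B| \le \|\BD\|_2 A/n$. Putting these together,
\[
|\Delta^k \psi_{j+lp_0}| \le \|\BD\|_2 \Bigl(\frac{4\pi}{A}\Bigr)^{k} + k\|\BD\|_2 \frac{A}{n}\Bigl(\frac{4\pi}{A}\Bigr)^{k-1},
\]
both of which are comfortably dominated by $4\|\BD\|_2(4\pi)^k/A^{(k-3)/2}$ (the bound in the lemma is deliberately generous, leaving factors of $A$ and $n$ in reserve for downstream use). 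I do not anticipate any genuine obstacle; the only point that needs verification is that each shifted index $j + l'p_0$, $l' \in \{l,\dots,l+k\}$, lies inside $J$ so that $|\psi_{j'} - m_{j'}| \le n^{-\tau}$ can be invoked at each step, which is exactly the standing hypothesis $[j+lp_0,\,j+(l+k)p_0] \subset J$ of the lemma.
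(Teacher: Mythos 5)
Your proof is correct and follows essentially the same route as the paper's: split off the rounding error via the triangle inequality (giving the $2^k n^{-\tau}$ term), and reduce $\Delta^k\psi$ to the identity $\sum_{i}\binom{k}{i}(-1)^i z^i=(1-z)^k$ with $z=e^{\sqrt{-1}\,p_0 t}$ and the bound $|1-z|\le 4\pi/A$ from \eqref{eqn:closeone}. The only cosmetic difference is that you handle the affine amplitude $(j+l'p_0)/n$ by a discrete Leibniz rule (differentiating in $z$), whereas the paper obtains the same two-term expression by differentiating the cosine identity in $t$; both land on the same estimate.
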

\begin{proof}(of Lemma \ref{lemma:diff}) Recall that ${\psi}_j =d_1 \cos(j t) -d_2 \frac{j}{n} \sin (j t)$ and $\|\psi_j\|_{\R/\Z} \le n^{-\tau}$ over all $j\in J$. Consider 
$$\Delta^k {\psi}_{j+ l p_0} = \sum_{i=0}^k \binom{k}{i} (-1)^i {\psi}_{j+ (i+l)p_0}.$$
We first have
\begin{align*}
\sum_{i=0}^k \binom{k}{i} (-1)^i  \cos( i p_0 t + (j+ l p_0 ) t) &=\Re \sum_{i=0}^k \binom{k}{i} (-1)^i  e^{\sqrt{-1} [i p_0 t + (j+ l p_0 ) t]}\\
&= \Re \Big((\sum_{i=0}^k \binom{k}{i} (-1)^i  e^{\sqrt{-1} i p_0 t}) e^{\sqrt{-1}(j+ l p_0 ) t}\Big)\\
&= \Re \Big((1-e^{\sqrt{-1} p_0 t})^k e^{\sqrt{-1}(j+ l p_0 ) t}\Big) \le (4 \pi/A)^k
\end{align*}
where we used \eqref{eqn:closeone} in the last estimate. It also follows that
\begin{align*}
\sum_{i=0}^k \binom{k}{i} (-1)^i  \frac{j+(i+l)p_0}{n}\sin( i p_0 t + (j+ l p_0 ) t) &=\frac{1}{n}\frac{\partial}{\partial t}\Big(\sum_{i=0}^k \binom{k}{i} (-1)^i  \cos( i p_0 t + (j+ l p_0 ) t)\Big)\\
&=\frac{1}{n} \frac{\partial}{\partial t} \Big(\Re \big((1-e^{\sqrt{-1} p_0 t})^k e^{\sqrt{-1}(j+ l p_0 ) t}\big)\Big)\\
&=\frac{1}{n} \Re \Big( \frac{\partial}{\partial t} \big((1-e^{\sqrt{-1} p_0 t})^k e^{\sqrt{-1}(j+ l p_0 ) t}\big)\Big)\\
&= \Re \Big( -\sqrt{-1} \frac{kp_0}{n} \big((1-e^{\sqrt{-1} p_0 t})^{k-1} e^{\sqrt{-1}(j+ l p_0 ) t}\big) \\
&+ \sqrt{-1}\frac{j+ l p_0}{n} \big((1-e^{\sqrt{-1} p_0 t})^k e^{\sqrt{-1}(j+ l p_0 ) t}  \Big)\\
&\le A (4 \pi/A)^{k-1} + (4 \pi/A)^k < (4\pi)^{k}/A^{k-3}.
\end{align*}
Putting the bounds together we obtain
$$|\Delta^k {\psi}_{j+ l p_0}| = |\sum_{i=0}^k \binom{k}{i} (-1)^i {\psi}_{j+ (i+l)p_0}| \le (|d_1|+|d_3|) (\frac{4 \pi}{\sqrt{A}})^k + (|d_2|+|d_4|)\frac{(4\pi)^{k}}{A^{k-3}} < 4\|\BD\|_2 \frac{(4\pi)^{k}}{A^{k-3}}.$$
It thus follows that 
\begin{align*}
|\Delta^k m_{j+lp_0}| &\le |\Delta^k {\psi}_{j+ l p_0 }|  + |\Delta^k ({\psi}_{j+ l p_0 } - m_{j+ l p_0})| \le 4\|\BD\|_2 \frac{(4\pi)^{k}}{A^{k-3}} + 4 \times 2^{k}n^{-\tau}.
\end{align*}
\end{proof}

Note that if we choose $k =k_0 = \lfloor c \log_2 n \rfloor$  for some small constant $c$ (such as $c<\tau/2$), and then as $A$ is  a sufficiently large constant, the RHS of \eqref{eqn:Delta:diff} is smaller than one. Because these numbers are integer, it follows that as long as  $[j+ l p_0, j+(l+k_0) p_0] \subset J$ we must have 
$$\Delta^{k_0} m_{j+lp_0}=0.$$
It follows from \eqref{eqn:Delta:diff} that $m_{j+ l p_0 } = P_j(l)$ where $P_j$ is a real polynomial of degree at most $k_0-1$. 

{\bf Vanishing integral part.} We next show that $P_j$ is a constant. Indeed, assuming otherwise, then as $P'_j$ has at most $k_0-2$ roots, there is an interval of length $|J|/k_0$ where $P_j$ is strictly monotone. But on this interval (of length of order $n^{1-4 \tau -o(1)}$ at least), $m_j \in [-n^{1-8\tau}, n^{1-8\tau}]$ (because $|m|\le \|\BD\|_2\le n^{1-8\tau}$), so this is impossible. Thus we have shown that 
$$m_{j+ l p_0 } = m_j \mbox{ for all } j,l\in \Z \mbox{ such that }  [j+ l p_0, j+(l+k_0) p_0 ] \subset J=[a,b].$$
Note that for any fixed $j$, the range for $l$ is $(a-j)/p_0 \le l \le (b-n^{o(1)}-j)/p_0$, which is an interval of length of order $n^{1-4\tau}$. Over this range of $l$, and with $A_0=p_0\le A = \lfloor \sqrt{C_0'} \rfloor$, the condition of $t$ in Condition \ref{cond:t} (see Claim \ref{claim:t}) implies that $ \psi_{j+lA_0} =d_1 \cos((j+ l A_0)t) -d_2 \frac{i}{n} \sin ((j+ l A_0) t)$ changes sign. But as $m_{j+ l A_0}=m$ is the common integral part for all $l$, this is impossible unless $m=0$. This completes the proof of \eqref{eqn:psi:RZ}.
\end{proof}

\section{Concentration results}\label{section:concentration}
\begin{proof}(of Theorem \ref{thm:bounded}) Consider the function $F(\Bx):= d_1(\Bx,\CA)$, which measures the $L_1$-distance. This function is $2C_0$-Lipschitz (coordinatewise), so by  McDiarmid's inequality, with $\mu=\E F(\Bx)$
$$\P(|F(\Bx) - \mu | \ge \la) \le 2 \exp(- \la^2/2nC_0^2).$$
This then implies that 
$$P(F(\Bx) =0) \P(F(\Bx) \ge \la) \le 4 \exp(-\la^2/4nC_0^2).$$ 
Indeed, if $\la \le \mu$ then 
$$\P(F(\Bx)=0) \le \P(F(\Bx) -\mu \le -\mu) \le 2\exp(-\mu^2/2nC_0^2) \le 2\exp(-\la^2/2nC_0^2),$$ 
while if $\la \ge \mu$ then 
\begin{align*}
\P(F(\Bx) =0) \P(F(\Bx) \ge \la) &\le \P(F(\Bx) -\mu \le -\mu)  \P(F(\Bx) -\mu \ge \la -\mu) \\
& \le  4 \exp(-(\mu^2 +(\la-\mu)^2)/2nC_0^2) \le 4 \exp(-\la^2/4nC_0^2).
\end{align*}
Now because of boundedness, $\|\Bx-\By\|_2^2 \le 2C_0 \|\Bx-\By\|_1$. So if $d_2(\Bx,\CA) \ge t \sqrt{n}$ then $d_1(\Bx,\CA) \ge t^2 n/2C_0$. We thus obtain
$$\P(\Bx \in \CA) \P(d_2(\Bx, \CA) \ge t \sqrt{n}) \le \P(\Bx \in \CA) \P(d_1(\Bx, \CA) \ge t^2 n/2C_0) \le 4\exp(-t^4 n/16C_0^4).$$
\end{proof}

\begin{proof}(of Theorem \ref{thm:sobolev}) Let $\la:= t\sqrt{n}$ and $F(\Bx) := \min\{d_2(\Bx,\CA), \la\}$. Then $F$ is 1-Lipschitz, and 
$$\E F(\Bx) \le (1- \P(\Bx\in \CA))\la.$$
It is known (see for instance \cite{L}) that for  distributions satisfying log-Sobolev inequality we have that 
$$\P(F(\Bx) \ge \E F(\Bx) + t) \le \exp(-t^2/4C_0).$$
Thus, since $\E F(\Bx)=\P(\Bx\not\in \CA)\E(F(\Bx)|\Bx\not\in \CA)\leq \lambda \P(\Bx\not\in \CA)$,
$$\P(d_2(\Bx,A) \ge \la) = \P(F(\Bx) \ge \la) \le \P(F(\Bx) \ge \E F(\Bx) + \P(\Bx\in \CA) \la) \le \exp(-\P^2(\Bx\in \CA) \la^2/4C_0).$$
\end{proof}

{\bf Acknowledgements.}  The first author is grateful to O. Nguyen for helpful discussion around Theorem \ref{thm:NgV}, and to T. Erd\'ely for help with references.

\nocite{*}

\end{document}